\documentclass[12pt]{amsart}

\usepackage{amscd} 
\usepackage{mathrsfs}

\textwidth=165mm \textheight=220mm \hoffset=-20mm \voffset=-5mm

\newtheorem{theorem}{Theorem}[section]
\newtheorem{lemma}[theorem]{Lemma}
 
\theoremstyle{definition}

\numberwithin{equation}{section}

\def\z*{\bar z}

\def\B{{\mathscr B}}

\def\K{\mathsf K}

\def\X{\mathsf X}
\def\Y{\mathsf Y}

\def\C{\mathcal C}

\def\dom{\text{\rm dom}}
\def\ran{\text{\rm ran}}
\def\supp{\text{\rm supp}}

\def\RE{\mathbb R}
\def\CO{{\mathbb C}}

\def\SL{S\! L}
\def\DL{D\! L}

\def\ph*{\phi_\Diamond}

\def\be{\begin{equation}}
\def\ee{\end{equation}}
\def\min{{\rm min}}

\def\-{{\rm in}}
\def\+{{\rm ex}}

\def\bou{{\mathscr B}}
\def\Cos{\mbox{Cos}}
\def\Sin{\mbox{Sin}}
\begin{document}

\title[Inverse wave scattering in the Laplace domain]
{Inverse wave scattering in the Laplace domain:
a factorization method approach}
\author{Andrea Mantile}
\author{Andrea Posilicano}

\address{Laboratoire de Math\'{e}matiques, Universit\'{e} de Reims -
FR3399 CNRS, Moulin de la Housse BP 1039, 51687 Reims, France}
\address{DiSAT, Sezione di Matematica, Universit\`a dell'Insubria, via Valleggio 11, I-22100
Como, Italy}
\email{andrea.mantile@univ-reims.fr}
\email{andrea.posilicano@uninsubria.it}

\thanks{AMS subject classifications: 35R30, 47A40, 47B25}

\begin{abstract} Let $\Delta_{\Lambda}\le \lambda_{\Lambda}$ be a semi-bounded self-adjoint realization of the Laplace operator with boundary conditions (Dirichlet, Neumann, semi-transparent) assigned on the Lipschitz boundary of a bounded obstacle $\Omega$. Let $u^{\Lambda}_{f}$ and $u^{0}_{f}$ denote the solutions of the wave equations corresponding  to $\Delta_{\Lambda}$ and to the free Laplacian $\Delta$ respectively, with a source term $f$ concentrated at time $t=0$ (a pulse). We show that for any fixed $\lambda>\lambda_{\Lambda}\ge 0$ and any fixed $B\subset\subset{\mathbb R}^{n}\backslash\overline\Omega$, the obstacle $\Omega$ can be reconstructed by the data 
$$
F^{\Lambda}_{\lambda}f(x):=\int_{0}^{\infty}e^{-\sqrt\lambda\,t}\big(u^{\Lambda}_{f}(t,x)-u^{0}_{f}(t,x)\big)\,dt\,,\qquad x\in B\,,\  f\in L^{2}({\mathbb R}^{n})\,,\ \mbox{supp}(f)\subset B\,.
$$
A similar result holds in the case of screens reconstruction, when the boundary conditions are assigned only on a part of the boundary. Our method exploits the factorized form of the resolvent difference 
$(-\Delta_{\Lambda}+\lambda)^{-1}-(-\Delta+\lambda)^{-1}$. 
\end{abstract}
\maketitle
\section{Introduction.}

We consider the problem of obstacles' reconstruction from measurements of
time-dependent scattered waves. Different approaches have been developed; in 
\cite{LuPo} time-Fourier transform is used to process data in the frequency
domain via the point source method. In \cite{ChHaLeMo} the case of Dirichlet
obstacles is considered in the time-dependent setting by using measurements
of causal waves, that is, waves such that $u(t,x)=0$ for $t\le T$ (see also 
\cite{HaLeMa} for Neumann and Robin obstacles). In these works a linear
sampling method is adapted to work on time-domain data without using the
Fourier transform. As remarked in \cite{ChHaLeMo}, this approach may offer a
better quality of the reconstruction compared to frequency domain methods
working with a single frequency; nevertheless, an approximation argument
prevents a rigorous mathematical characterization of the obstacle. A
different strategy, based on an adaptation of the factorization method to
the time-domain, has been recently proposed in \cite{CaHaLe}. There, the
authors introduce a far field operator for Dirichlet obstacles scattering in
the time dependent setting; the inverse data are given by measurements of
scattered causal waves in the far field regime. However, to obtain a symmetric factorization containing a
coercive middle operator (needed to implement the factorization method), a
perturbed far field operator, arising from artificially modified
measurements in the time range $(T,+\infty)$, is introduced. The analytical tool to study this
factorization is provided by Laplace transform
analysis of retarded potentials. Here we exploit a different approach in which the far field 
operator is directly defined by the Laplace transform of the scattered field, so to take advantage of the factorized form coming out of the resolvent difference between the generators of the free and the interacting dynamics; this avoids the introduction of any kind of perturbation (unphysical or not) of the far field operator, the latter  being already factorized in terms of a coercive middle operator. Our approach has some resemblance with a method using the Laplace transform of Maxwell's equations to obtain a time-independent and coercive system which has been used in the PhD thesis \cite{Jais} (we thank an anonymous referee for pointing out to us such a reference).
\par
In our recent paper \cite{MP-inv}, we used Kre\u{\i}n-type resolvent
formulae, combined with the limiting absorption principle and the
factorization method, to provide inverse scattering (reconstrucion) results
for Lipschitz obstacles and screens. In what follows analogous results are
obtained for the time-dependent obstacle scattering problem; our approach
exploits the Laplace transform analysis of the time-propagator leading to a
factorized representation of the data operator in the Laplace transform
domain. Then, using sampling methods, obstacles and screens can be
reconstructed by the knowledge, for some fixed $\lambda >0$ and $B\subset
\subset \mathbb{R}^{n}\backslash \overline{\Omega }$, of the data 
\begin{equation*}
F_{\lambda }^{\Lambda }f(x):=\int_{0}^{\infty }e^{-\sqrt{\lambda }\,t}\big(%
u_{f}^{\Lambda }(t,x)-u_{f}^{0}(t,x)\big)\,dt\,,\qquad x\in B\,,\ f\in L^{2}(%
\mathbb{R}^{n})\,,\ \mbox{supp}(f)\subset B\,,
\end{equation*}%
where $u_{f}^{\Lambda }$ and $u_{f}^{0}$ solve the inhomogeneous wave
equations for $\Delta _{\Lambda }$ and $\Delta $ respectively with a pulse $f
$ concentrated at time $t=0$ (see Theorems \ref{thm} and \ref{thm-s} for the
precise statements). Here $\Delta $ is the self-adjoint Laplacian in the
whole space, describing free waves propagation, while $\Delta _{\Lambda }$
is the self-adjoint realization of the Laplacian with boundary conditions on
(part of) the boundary $\Gamma $ of the obstacle $\Omega $ which are
uniquely identified by the choice of the operator $\Lambda $ acting on
functions on $\Gamma $. The allowed $\Lambda $'s permit to consider many
local boundary conditions, in particular Dirichlet, Neumann and
semi-transparent ones, either assigned on the whole boundary $\Gamma $ (see
Section \ref{closed}) or on a relatively open piece $\Sigma $ (see Section %
\ref{unclosed}).
\par
Our modeling is inspired by an ideal experimental setup where the incident
wave is generated by pulses space-localized in a fixed open and bounded
region $B$ and the measurements are performed by detectors placed in the
same domain $B$ (see for instance \cite{borcea}). This choice allows to use
in our computations results from wave propagation theory under standard
regularity assumptions. In the applications perspective, a more appropriate
choice would consist in replacing $B$ with the boundary of an open domain.
This setting would be closer to the one proposed in \cite{ChHaLeMo} and \cite%
{CaHaLe}, where the incident fields are generated by a density on a sphere,
while (concerning \cite{CaHaLe}) the data operator output, i.e. the physical
measure, depicts the behavior of scattered fields at far distances and
large times. The same kind of arguments used here still apply to this case
and we do not expect relevant changes in our conclusions in such a modified
setting. Nevertheless, assigning pulse-sources and measurements on a
boundary would require the study of the mapping properties in distributional spaces of the resolvents appearing in our Kre\u\i n's type formulae; this case will be considered in a successive paper.
\par
A relevant feature of our approach rests upon the fact that it avoids
unphysical modifications of the data and provides a rigorous reconstruction
algorithm; let us point out that our results, since the algorithm is exact, immediately imply uniqueness: the obstacle is univocally determined by the scattered field (see, e.g., the review paper \cite{Isakov} for the many facets of the uniqueness problem in inverse scattering). As in the aforementioned works, we require
global-in-time data; this is due to the use of the Laplace transform. The
error introduced by using finite-time data is considered: we provide an
estimate regarding the difference (in uniform operator norm) between the
experimentally realistic operator $F_{\lambda}^{\Lambda,\varepsilon,t_{
\circ}}$ defined in term of pulses concentrated on small time intervals $
[0,\varepsilon]$, $\varepsilon\ll1$, and measurements lasting a finite time $
t_{\circ}\gg\varepsilon$, and the ideal operator $F_{\lambda}^{\Lambda}$
corresponding to the limit case $t_{\circ }=+\infty$, $\varepsilon=0$. In
Lemma \ref{estimate} we show that it is of order $(e^{-\lambda
t_{\circ}}+\varepsilon )\lambda^{-1/2}$; thus it can be made arbitrarily
small by taking $\lambda$ sufficiently large. However, let us point out that this does not  mean neither that the factorization method will work for the operator $F_{\lambda}^{\Lambda,\varepsilon,t_{\circ}}$ nor that the error introduced by replacing $F_{\lambda}^{\Lambda}$ with $F_{\lambda}^{\Lambda,\varepsilon,t_{\circ}}$ in formulae \eqref{inf}, \eqref{1/4} and \eqref{inf-s}, \eqref{1/4-s} will be necessarily small; the stability problem in inverse scattering problems is notoriously difficult and in this short communication we do not treat this issue. 
\vskip10pt\noindent
{\bf Acknowledgements.} The authors are indebted to Guanghui Hu for the 
fruitful discussions which largely inspired this work.
\section{The resolvent formula for Laplacians with boundary conditions.}
Let $\Delta:H^{2}(\RE^{n})\subseteq L^{2}(\RE^{n})\to L^{2}(\RE^{n})$ be the self-adjoint operator given by the free Laplacian on the whole space; here $H^{2}(\RE^{n})$ denotes the usual Sobolev space  of square integrable functions with square integrable distributional Laplacian. Another  self-adjoint operator $\widetilde\Delta:\dom(\widetilde\Delta)\subseteq L^{2}(\RE^{n})\to L^{2}(\RE^{n})$ is said to
be a singular perturbation of $\Delta$ if the set 
\be\label{SP}
\{u\in H^{2}(\RE^{n})\cap \dom(\widetilde\Delta)\, :\, \Delta u=\widetilde\Delta u\}
\ee 
is dense in
$L^{2}(\RE^{n})$.  In concrete situations $\widetilde\Delta$ represents the Laplace operator with some kind of boundary conditions on a null subset. We notice that  $\widetilde\Delta$ is a self-adjoint extension of the symmetric operator $\Delta^{\circ}$ given by restricting the free Laplacian to the set defined in \eqref{SP}. Therefore the singular perturbations of the Laplacian can be realized as self-adjoint extensions of the symmetric operators given by the restrictions of the Laplacian $\Delta$ to  subspaces which are dense in $L^{2}(\RE^{n})$ and closed in $H^{2}(\RE^{n})$. Without loss of generality we can suppose that such subspaces are the kernels of some bounded linear maps. This leads to introduce the following framework.\par
Given an auxiliary Hilbert space $\K$, we introduce a linear application $\tau:H^{2}(\RE^{n})\to\K$
which plays the role of an abstract trace (evaluation) map. We assume that  \vskip5pt\noindent 
1. $\tau$ is continuous; \par\noindent
2. $\tau$ is surjective (so that $\K$ plays the role of the trace space);\par\noindent 
3. ker$(\tau)$ is dense in $L^{2}(\RE^{n})$. 
\vskip5pt\noindent In the following we do not identify $\K$ with its dual $\K^{*}$; however we use $\K^{**}\equiv\K$. We suppose that there exists a Hilbert space $\K_{0}$ and continuous embeddings with dense range $\K\hookrightarrow\K_{0}\hookrightarrow\K^{*}$; then the $\K$-$\K^{*}$ duality $\langle\cdot,\cdot\rangle_{\K^{*}\!,\K}$ (conjugate-linear with respect to the first variable) is defined in terms of the scalar product of  $\K_{0}$. \par For any $z$ in the resolvent set $\CO\backslash(-\infty,0]$,  we define the bounded operators $$R^{0}_{z}:=(-\Delta+z)^{-1}:L^{2}(\RE^{n})\to H^{2}(\RE^{n})$$  and 
\be\label{Gz}
G_{z}:=(\tau R^{0}_{\bar z})^*:\K^{*}\to L^{2}(\RE^{n})\,.
\ee
Given a couple of reflexive Banach spaces $\X$, $\Y$, with $\K\hookrightarrow\X$ (and hence $\X^{*}\hookrightarrow\K^{*}$), and given the bounded operator $P\in\bou(\X,\Y)$, we consider a family of maps $M_{z}\in \B(\X^{*},\X)$, $z\in \CO\backslash(-\infty,0]$, such that 
\be\label{MM}
M_{z}^{*}=M_{\bar z}\,,\qquad M_{z}-M_{w}=(z-w)\,G^{*}_{\bar w}G_{z}
\ee
and then, supposing that 
\be
Z_{M}:=\{z\in \CO\backslash(-\infty,0]: (PM_{z}P^{*})^{-1}\in\bou(\Y,\Y^{*}),\,(PM_{\bar z}P^{*})^{-1}\in\bou(\Y,\Y^{*})\}\,,
\ee
is not empty, we define the  map 
$$
\Lambda:Z_{M}\to\B(\X ,\X^*)\,,\qquad z\mapsto\Lambda_{z}:=P^{*}(PM_{z}P^{*})^{-1}P\,.
$$
By \eqref{MM} one gets (see \cite[relation (2) and (4)]{P01})
\be\label{Lambda}
\Lambda_{z}^{*}=\Lambda_{\bar z}\,,
\qquad
\Lambda_{w}-\Lambda_{z}=(z-w)\Lambda_{w}G_{\bar w}^{*}G_{z}\Lambda_{z}
\ee
and so, by \cite[Theorem 2.1]{P01}, one gets the following result (see \cite[Theorem 2.4]{JMPA}; here we also take into account \cite[Theorem 2.19]{CFP}). 
\begin{theorem}\label{WO} Let $\tau$, $P$, $M$  be as above and suppose that $Z_{M}$ is not empty. Then  
the family of bounded linear maps in $L^{2}(\RE^{n})$ 
\be\label{resolvent}
R_{z}^{\Lambda}:=R^0_{z}+G_{z}P^{*}(PM_{z}P^{*})^{-1}PG^{*}_{\bar z}\,,\qquad z\in Z_{M}\,,
\ee
is the resolvent of a singular perturbation $\Delta_{\Lambda}$ of $\Delta$ and $Z_{M}=\rho(\Delta_{\Lambda})\cap\CO\backslash(-\infty,0]$, $\rho(\Delta_{\Lambda})$ denoting the resolvent set of $\Delta_{\Lambda}$. 
\end{theorem}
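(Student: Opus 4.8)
The plan is to check that the family $\{R^{\Lambda}_{z}\}_{z\in Z_{M}}$ meets the hypotheses of the abstract Kre\u\i n-type result \cite[Theorem 2.1]{P01}; concretely this means verifying the first resolvent identity, the symmetry $(R^{\Lambda}_{z})^{*}=R^{\Lambda}_{\bar z}$, and triviality of the kernel, after which the existence of a unique self-adjoint $\Delta_{\Lambda}$ with $R^{\Lambda}_{z}=(-\Delta_{\Lambda}+z)^{-1}$ is automatic. The three algebraic ingredients I would isolate first are: the resolvent identity $R^{0}_{z}-R^{0}_{w}=(w-z)R^{0}_{z}R^{0}_{w}$ for the free Laplacian; the companion relation for the potential operators, $G_{z}-G_{w}=(w-z)R^{0}_{w}G_{z}$, obtained by applying $\tau$ to the previous identity with $z,w$ replaced by $\bar z,\bar w$ and taking adjoints (using $(R^{0}_{\bar z})^{*}=R^{0}_{z}$ and the definition \eqref{Gz}); and the identities \eqref{Lambda} for $\Lambda_{z}=P^{*}(PM_{z}P^{*})^{-1}P$, which already encode the essential structure of $M_{z}$.

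With these in hand, verifying $R^{\Lambda}_{z}-R^{\Lambda}_{w}=(w-z)R^{\Lambda}_{z}R^{\Lambda}_{w}$ is a direct, if lengthy, substitution: I would expand the left-hand side as $(R^{0}_{z}-R^{0}_{w})+(G_{z}\Lambda_{z}G^{*}_{\bar z}-G_{w}\Lambda_{w}G^{*}_{\bar w})$, insert the $G$-relation and its adjoint to rewrite $G_{z}$ and $G^{*}_{\bar w}$ in terms of $R^{0}$, and then collect, the cross terms matching precisely because of the second identity in \eqref{Lambda}. The self-adjointness relation is shorter: taking adjoints in \eqref{resolvent} and using $(R^{0}_{z})^{*}=R^{0}_{\bar z}$, $(G^{*}_{\bar z})^{*}=G_{\bar z}$, $(G_{z})^{*}=G^{*}_{z}$ together with $\Lambda_{z}^{*}=\Lambda_{\bar z}$ from \eqref{Lambda} yields $(R^{\Lambda}_{z})^{*}=R^{0}_{\bar z}+G_{\bar z}\Lambda_{\bar z}G^{*}_{z}=R^{\Lambda}_{\bar z}$. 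Triviality of $\ker R^{\Lambda}_{z}$, hence that $R^{\Lambda}_{z}$ is a genuine resolvent, and the uniqueness of the generator $\Delta_{\Lambda}$ are then supplied by \cite[Theorem 2.1]{P01}.

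To see that $\Delta_{\Lambda}$ is a singular perturbation of $\Delta$ in the sense of \eqref{SP}, I would test \eqref{resolvent} on $\ker(\tau)$. Note first that $G^{*}_{\bar z}=(G_{\bar z})^{*}=\tau R^{0}_{z}$. Given $\phi\in\ker(\tau)\subseteq H^{2}(\RE^{n})$, put $\psi:=(-\Delta+z)\phi$, so that $R^{0}_{z}\psi=\phi$ and $G^{*}_{\bar z}\psi=\tau R^{0}_{z}\psi=\tau\phi=0$; hence \eqref{resolvent} gives $R^{\Lambda}_{z}\psi=\phi$. Thus $\phi\in\ran(R^{\Lambda}_{z})=\dom(\Delta_{\Lambda})$ and $(-\Delta_{\Lambda}+z)\phi=\psi=(-\Delta+z)\phi$, i.e. $\Delta_{\Lambda}\phi=\Delta\phi$. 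Since $\ker(\tau)$ is dense in $L^{2}(\RE^{n})$ by assumption 3, the set \eqref{SP} (with $\widetilde\Delta=\Delta_{\Lambda}$) is dense, which is exactly the required property.

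The remaining claim $Z_{M}=\rho(\Delta_{\Lambda})\cap\CO\backslash(-\infty,0]$ is where the real work sits, and I expect it to be the main obstacle. The inclusion $\subseteq$ is immediate, since for $z\in Z_{M}$ the operator $R^{\Lambda}_{z}$ is everywhere defined and bounded. For the converse I would argue by contraposition: if $z\in\CO\backslash(-\infty,0]$ lies outside $Z_{M}$ then, using $PM_{\bar z}P^{*}=(PM_{z}P^{*})^{*}$, one of $PM_{z}P^{*}$, $PM_{\bar z}P^{*}$ fails to be boundedly invertible, and I would need to promote a nontrivial element of the kernel, or a defect of the range, of $PM_{z}P^{*}$ into a genuine spectral obstruction for $-\Delta_{\Lambda}+z$, via the Birman--Schwinger/Kre\u\i n correspondence between $\ker(PM_{z}P^{*})$ and $\ker(-\Delta_{\Lambda}+z)$. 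Making this correspondence precise at the level of the reflexive Banach spaces $\X$, $\Y$ and the intermediate operator $P$ is the delicate point; here I would lean on the detailed analysis of \cite[Theorem 2.1]{P01} and its refinement in \cite[Theorem 2.19]{CFP}, which is precisely why the statement is assembled from those references together with \cite[Theorem 2.4]{JMPA}.
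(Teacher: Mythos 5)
Your proposal is correct and follows essentially the same route as the paper: the paper likewise derives the identities \eqref{Lambda} from \eqref{MM} and then delegates the existence of $\Delta_{\Lambda}$, the identification of $Z_{M}$ with $\rho(\Delta_{\Lambda})\cap\CO\backslash(-\infty,0]$, and the singular-perturbation property to \cite[Theorem 2.1]{P01}, \cite[Theorem 2.4]{JMPA} and \cite[Theorem 2.19]{CFP}. You merely make explicit some of the verifications (the resolvent identity, the symmetry, and the density argument on $\ker(\tau)$) that the paper leaves to the cited references.
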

In the next sections, given an open, bounded set $\Omega\equiv\Omega_{\-}\subset\RE^{n}$ with a Lipschitz boundary $\Gamma$ and such that  $\Omega_{\+}:=\RE^{n}\backslash\overline\Omega$ is connected, we consider models where the map $\tau:H^{2}(\RE^{n})\to\K$ corresponds to the two different cases:
$$\text{1) \centerline{$\tau= \gamma_{0}\,,\qquad\K=B^{3/2}_{2,2}(\Gamma)\,,\qquad\X=H^{s}(\Gamma)$, $|s|\le 1$;}}
$$
$$\text{2)\centerline{$\tau= \gamma_{1}\,,\qquad \K=H^{1/2}(\Gamma)\,,\qquad\X=H^{s}(\Gamma)$, $-1\le s<1/2$;}}
$$
Here $\K_{0}=L^{2}(\Gamma)$, $\gamma_{0}$ and $\gamma_{1}$ denote the Dirichlet and Neumann traces on the boundary $\Gamma$  and $H^{s}(\Gamma)$, $|s|\le 1$, denotes the Hilbert space of Sobolev functions of order $s$ on $\Gamma$ (see, e.g., \cite[Chapter 3]{McL}); the Hilbert space $B^{3/2}_{2,2}(\Gamma)$ is a Besov-like space (see \cite[Section 2, Chapter V]{JW} for the precise definitions) giving the correct trace space of $\gamma_{0}|H^{2}(\RE^{n})$ in the case $\Gamma$ is a Lipschitz manifold (whenever $\Gamma$ is more regular  it identifies with $H^{3/2}(\Gamma)$).\par 
We introduce the label $\sharp=D, N$  according to the two possible different choices above. The operators defined in \eqref{Gz}  in terms of one of the two traces $\tau_{D}=\gamma_{0}$, $\tau_{N}=\gamma_{1}$  is then denoted by 
$$
G^{\sharp}_{z}:\K^{*}\to L^{2}(\RE^{n})\,,\quad z\in\CO\backslash(-\infty,0]\,.
$$ 
We also introduce the two spaces $\K=\K_{\sharp}$, $\X=\X^{s}_{\sharp}$, where
$$
\K_{D}=B^{3/2}_{2,2}(\Gamma)\,,\qquad\K_{N}=H^{1/2}(\Gamma)\,,
$$
$$
\X_{D}^{s}:=H^{1/2-s}(\Gamma)\,,\qquad \X_{N}^{s}:=H^{-1/2-s}(\Gamma) \,,\quad 0\le s\le 1/2\,.
$$
Since $\K_{\sharp}\hookrightarrow \X^{s}_{\sharp}$, and hence ${\X^{s}_{\sharp}}^{*}\hookrightarrow \K_{\sharp}^{*}$, we can restrict $G^{\sharp}_{\lambda}$ to the spaces 
${\X^{s}_{\sharp}}^{*}$; notice that, by the definition \eqref{Gz}, one has
$$
G^{D}_{z}\phi=\SL_{z}\phi\,,\qquad G^{N}_{z}\varphi=\DL_{z}\varphi\,,
$$
where $\SL_{z}$ and $\DL_{z}$ are the single- and double-layer operators associated to $(-\Delta+z)$ (see, e.g., \cite[Chapter 6]{McL}).
\subsection{Laplace operators with boundary conditions on hypersurfaces.}\label{closed} In this section we use Theorem \ref{WO} in the case $\Y=\X$ and $P=1_{\X}$.
\subsubsection{The Dirichlel Laplacian.}\label{Dir} Let $\Delta^{D}_{\Omega_{\-/\+}}$ be the self-adjoint operators in $L^{2}(\Omega_{\-/\+})$ corresponding to the Laplace operator with Dirichlet boundary conditions. One has $\Delta^{D}_{\Omega_{\-}}\oplus \Delta^{D}_{\Omega_{\+}}= \Delta_{\Lambda^{D}}$, where
$$
\Lambda^{D}_{z}=(M_{z}^{D})^{-1}=-(\gamma_{0}\SL_{z})^{-1}\in\B(H^{1/2}(\Gamma),H^{-1/2}(\Gamma))\,,\quad z\in\CO\backslash(-\infty,0]
$$
(see \cite[Section 5.2]{JMPA}).
\subsubsection{The Neumann Laplacian.}\label{Neu}  Let $\Delta^{N}_{\Omega_{\-/\+}}$ be the self-adjoint operators in $L^{2}(\Omega_{\-/\+})$ corresponding to the Laplace operator with Neumann boundary conditions. One has  $\Delta^{N}_{\Omega_{\-}}\oplus \Delta^{N}_{\Omega_{\+}}= \Delta_{\Lambda^{N}}$, where 
$$\Lambda^{N}_{z}=(M_{z}^{N})^{-1}=-(\gamma_{1}\DL_{z})^{-1}\in\B(H^{-1/2}(\Gamma),H^{1/2}(\Gamma))\,,\quad z\in\CO\backslash(-\infty,0]
$$ 
(see \cite[Section 5.3]{JMPA}).
\subsubsection{The Laplacian with semitransparent boundary conditions.}\label{ST} Here $\alpha$ and $\theta$ are real-valued functions and we use the same symbols to denote the corresponding multiplication operators. Taking
$$
\Lambda^{\alpha}_{z}:=(M^{\alpha}_{z})^{-1}=-\left(\frac1\alpha+\gamma_{0}\SL_{z}\right)^{-1}\in\B(L^{2}(\Gamma))\,,\quad z\in\CO\backslash(-\infty,\lambda_{\alpha}]\,,\ \lambda_{\alpha}\ge 0\,,
$$
where $\alpha\in L^{\infty}(\Gamma)$, $\frac1\alpha\in L^{\infty}(\Gamma)$, the self-adjoint operator $\Delta_{\Lambda^{\alpha}}$ represents a bounded from above Laplace operator with the semi-transparent boundary conditions 
$$
\alpha\gamma_{0}u=[\gamma_{1}]u\,,\quad [\gamma_{0}]u=0
$$
(see \cite[Section 4.1.3]{MP-inv}); here $[\gamma_{0}]u$ and $[\gamma_{1}]u$ denote the jump of the Dirichlet and Neumann traces of the function $u$ across the boundary. Taking 
\be\label{delta}
\Lambda^{\theta}_{z}:=(M^{\theta}_{z})^{-1}=(\theta-\gamma_{1}\DL_{z})^{-1}\in\B(H^{-1/2}(\Gamma),H^{1/2}(\Gamma))\,,\quad z\in\CO\backslash(-\infty,\lambda_{\theta}]\,,\ \lambda_{\theta}\ge 0\,,
\ee
where $\theta\in L^{p}(\Gamma)$, $p>2$, the self-adjoint operator $\Delta_{\Lambda^{\theta}}$ represents a bounded from above Laplace operator with the semi-transparent boundary conditions 
\be\label{delta'}
\gamma_{1}u=\theta[\gamma_{0}]u\,,\quad [\gamma_{1}]u=0
\ee
(see \cite[Section 4.1.4]{MP-inv}).

\subsection{Laplace operators with boundary conditions on unclosed hypersurfaces.}\label{unclosed}  Here we consider the case where the boundary conditions are assigned on a relatively open subset $\Sigma$ of the boundary $\Gamma$ of the domain $\Omega$. We use Theorem \ref{WO} with $$
\Y=H^{s}(\Sigma)\,,\qquad P=R_{\Sigma}:H^{s}(\Gamma)\to H^{s}(\Sigma)\,,\quad R_{\Sigma}\phi:=\phi|\Sigma\,.
$$  
Notice that $\Y^{*}=H^{s}(\Sigma)^{*}$ identifies with $H^{-s}_{\overline\Sigma}(\Gamma)$ and $P^{*}$ identifies with 
$$
R^{*}_{\Sigma}:H^{-s}_{\overline\Sigma}(\Gamma)\to H^{-s}(\Gamma)\,,\qquad R^{*}_{\Sigma}\phi:=\phi
$$ 
(see \cite[Lemma 5.1]{MP-inv}), where 
$$
H^{s}_{\overline\Sigma}(\Gamma):=\{\phi\in H^{s}(\Gamma):\supp(\phi)\subseteq \overline\Sigma\}\,.
$$
\subsubsection{The Dirichlel Laplacian.}\label{Dir-s} Considering $\Lambda^{D,\Sigma}_{z}:=R^{*}_{\Sigma}(R_{\Sigma}M^{D}_{z}R^{*}_{\Sigma})^{-1}R_{\Sigma}$, $z\in\CO\backslash(-\infty,0]$, 
$$
(R_{\Sigma}M^{D}_{z}R^{*}_{\Sigma})^{-1}
=-(R_{\Sigma}\gamma_{0}\SL_{z}R^{*}_{\Sigma})^{-1}\in\B(H^{1/2}(\Sigma),H_{\overline \Sigma}^{-1/2}(\Gamma))\,,
$$
the self-adjoint operator $\Delta_{\Lambda^{D,\Sigma}}$ represents a bounded from below Laplacian with Dirichlet boundary conditions on $\Sigma$ (see \cite[Example 7.1]{JST}, \cite[Section 5.1.1]{MP-inv}).
\subsubsection{The Neumann Laplacian.}\label{Neu-s} Considering $\Lambda^{N,\Sigma}_{z}:=R^{*}_{\Sigma}(R_{\Sigma}M^{N}_{z}R^{*}_{\Sigma})^{-1}R_{\Sigma}$, $z\in\CO\backslash(-\infty,0]$, 
$$
(R_{\Sigma}M_{z}^{N}R_{\Sigma}^{*})^{-1}=-(R_{\Sigma}\gamma_{1}\DL_{z}R_{\Sigma}^{*})^{-1}\in\B(H^{-1/2}(\Sigma),H_{\overline\Sigma}^{1/2}(\Gamma))\,,
$$
the self-adjoint operator $\Delta_{\Lambda^{N,\Sigma}}$ represents a bounded from below Laplacian with Neumann boundary conditions on $\Sigma$ (see \cite[Example 7.1]{JST}, \cite[Section 5.1.2]{MP-inv}).
\subsubsection{The Laplacian with semitransparent boundary conditions.}\label{ST-s} Considering $\Lambda^{\alpha,\Sigma}_{z}:=\newline R^{*}_{\Sigma}(R_{\Sigma}M^{\alpha}_{z}R^{*}_{\Sigma})^{-1}R_{\Sigma}$, $z\in\CO\backslash(-\infty,\tilde\lambda_{\alpha}]$, $\tilde\lambda_{\alpha}\ge 0$, 
$$
(R_{\Sigma}M^{\alpha}_{z}R_{\Sigma}^{*})^{-1}=-\left(R_{\Sigma}\left(\frac1\alpha+\gamma_{0}\SL_{z}\right)R_{\Sigma}^{*}\right)^{-1}\in\B(L^{2}(\Sigma);L_{\overline\Sigma}^{2}(\Gamma))\,,
$$
where $\mbox{sign}(\alpha)$ is constant,   
the self-adjoint operator $\Delta_{\Lambda^{\alpha,\Sigma}}$ represents a bounded from below Laplacian with boundary conditions \eqref{delta} on $\Sigma$ (see \cite[Example 7.3]{JST}, \cite[Section 5.1.3]{MP-inv}).\par
Considering $\Lambda^{\theta,\Sigma}_{z}:=R^{*}_{\Sigma}(R_{\Sigma}M^{\theta}_{z}R^{*}_{\Sigma})^{-1}R_{\Sigma}$, $z\in\CO\backslash(-\infty,\tilde\lambda_{\theta}]$, $\tilde\lambda_{\theta}\ge 0$,  
$$
(R_{\Sigma}M^{\theta}_{z}R_{\Sigma}^{*})^{-1}=(R_{\Sigma}(\theta-\gamma_{1}\DL_{z})R_{\Sigma}^{*})^{-1}\in\B(H^{-1/2}(\Sigma),H_{\overline\Sigma}^{1/2}(\Gamma))\,,
$$
the self-adjoint operator $\Delta_{\Lambda^{\theta,\Sigma}}$ represents a bounded from below Laplacian with boundary conditions \eqref{delta'} on $\Sigma$ (see \cite[Example 7.4]{JST}, \cite[Section 5.1.4]{MP-inv}).
\section{Wave Scattering in the Laplace domain.}
Let  $\Delta_{\Lambda}\le \lambda_{\Lambda}$, $\lambda_{\Lambda}\ge 0$, be a semi-bounded singular perturbation in $L^{2}(\RE^{n})$ as defined in the previous section; we consider the Cauchy problem for the wave equation
\be
\begin{cases}\label{Cpb}
\partial_{tt} u(t)=\Delta_{\Diamond}u(t)\\
u(0)=u_{0}\\
\partial_{t} u(0)=v_{0}\,.
\end{cases}
\ee
Here the index $\Diamond$ has the two possible values $\Diamond=0$ or $\Diamond=\Lambda$ and $\Delta_{0}$ identifies with the free Laplacian $\Delta:H^{2}(\RE^{n})\subset L^{2}(\RE^{n})\to L^{2}(\RE^{n})$; in the following we set $\lambda_{0}=0$.\par We say that $u\in C(\RE_{+};L^{2}(\RE^{n}))$ is a mild solution of \eqref{Cpb} whenever
$$
\int_{0}^{t}\int_{0}^{s}u(r)\,dr\,ds\equiv\int_{0}^{t}(t-s)u(s)\,ds\,\in\dom(\Delta_{\Diamond})
$$
and
$$
u(t)=u_{0}+tv_{0}+\Delta_{\Diamond}\int_{0}^{t}(t-s)u(s)\,ds
$$
for any $t\ge 0$. 
By \cite[Proposition 3.14.4, Corollary 3.14.8 and Example 3.14.16]{A}, the unique mild solution of \eqref{Cpb} is given by 
\be\label{sol}
u(t)=\mbox{Cos}_{\Diamond}(t)\,u_{0}+\mbox{Sin}_{\Diamond}(t)\,v_{0}
\ee
where the $\bou(L^{2}(\RE^{n}))$-valued  functions $t\mapsto\mbox{Cos}_{\Diamond}(t)$ and $t\mapsto\mbox{Sin}_{\Diamond}(t)$ are univocally defined through the $\bou(L^{2}(\RE^{n}))$-valued (inverse) Laplace transform by the relations  
\be\label{cos}
\sqrt\lambda\,(-\Delta_{\Diamond}+\lambda)^{-1}=\int_{0}^{\infty}e^{-\sqrt\lambda\, t}\,\mbox{Cos}_{\Diamond}(t)\,dt\,,\qquad \lambda>\lambda_{\Diamond}\,,
\ee
\be\label{sin}
(-\Delta_{\Diamond}+\lambda)^{-1}=\int_{0}^{\infty}e^{-\sqrt\lambda\, t}\,\mbox{Sin}_{\Diamond}(t)\,dt\,,
\qquad\lambda>\lambda_{\Diamond}\,.
\ee
One has (see \cite[(6.14), (6.15), Chap. II]{Fatto})
\be\label{hyp}
\|\Cos_{\Diamond}(t)\|_{\bou(L^{2}(\RE^{n}))}\le\cosh(\sqrt{\lambda_{\Diamond}}\,t)\,,\quad 
\|\Sin_{\Diamond}(t)\|_{\bou(L^{2}(\RE^{n}))}\le\frac{\sinh(\sqrt{\lambda_{\Diamond}}\,t)}{\sqrt{\lambda_{\Diamond}}}\,.
\ee
Whenever $\lambda_{\Diamond}=0$, by functional calculus one gets
$$
\mbox{Cos}_{\Diamond}(t)=\cos(t(-\Delta_{\Diamond})^{1/2})\,,\qquad \mbox{Sin}_{\Diamond}(t)=
(-\Delta_{\Diamond})^{-1/2}\sin(t(-\Delta_{\Diamond})^{1/2})
$$ 
and so, in this case,
\be\label{trig}
\|\Cos_{\Diamond}(t)\|_{\bou(L^{2}(\RE^{n}))}\le 1\,,\quad 
\|\Sin_{\Diamond}(t)\|_{\bou(L^{2}(\RE^{n}))}\le \min\{t,1\}\,.
\ee
Given $\chi_{\varepsilon}$ a bounded, not negative function such that 
$$\text{$\chi_{\varepsilon}(s)=0$ whenever $s\ge \varepsilon>0$ and $\int_{0}^{\varepsilon}\chi_{\varepsilon}(s)\,ds=1$,}
$$ and given $f\in L^{2}(\RE^{n})$,  let $u^{\Diamond}_{f,\varepsilon}$ be the solution of the wave equation with the pulse $\chi_{\varepsilon}f$, i.e., $u^{\Diamond}_{f,\varepsilon}$ solves the inhomogeneous Cauchy problem 
\be
\begin{cases}\label{Cpb-inh}
\partial_{tt} u^{\Diamond}_{f,\varepsilon}(t)=\Delta_{\Diamond}u^{\Diamond}_{f,\varepsilon}(t)+\chi_{\varepsilon}(t)f\\
u^{\Diamond}_{f,\varepsilon}(0)=0\\
\partial_{t} u^{\Diamond}_{f,\varepsilon}(0)=0\,.
\end{cases}
\ee
By \cite[Proposition 3.1.16]{A} (see also \cite[Section II.4]{Fatto}), $u^{\Diamond}_{f,\varepsilon}$ is given by
$$
u^{\Diamond}_{f,\varepsilon}(t):=\int_{0}^{t}\mbox{Sin}_{\Diamond}(t-s)\chi_{\varepsilon}(s)f\,ds
\,.
$$
In scattering experiments one measures the scattered wave 
$$S^{\Lambda,\varepsilon}_{t}f:=u^{\Lambda}_{f,\varepsilon}(t)-u^{0}_{f,\varepsilon}(t)$$
produced by the short pulse $\chi_{\varepsilon}f$, $\varepsilon\ll 1$. Since the measurements last a finite time $t_{\circ}\gg \varepsilon$ and  
detectors occupy a finite region, we introduce  the continuous $\bou(L^{2}(B))$-valued map
\be\label{fp}
t\mapsto 1_{[0,t_{\circ}]}(t)1_{B}S^{\Lambda,\varepsilon}_{t}1_{B}\,,
\ee
where $B\subset\subset\RE^{n}\backslash\overline\Omega$ is open and bounded and $1_{X}$ denotes the indicator function of a set $X$. We introduce 
the operator family 
$F^{\Lambda,t_{\circ},\varepsilon}_{\lambda}$ given by 
\be\label{FFF}
F^{\Lambda,t_{\circ},\varepsilon}_{\lambda}:=\int_{0}^{t_{\circ}}e^{-\sqrt{\lambda}\, t}\, 1_{B}S^{\Lambda,\varepsilon}_{t}1_{B}\,dt\,,\quad \lambda>\lambda_{\Lambda}\,,
\ee
which is the Laplace transform of \eqref{fp}. In an ideal setup, corresponding to instantaneous pulses and measurement lasting an infinite time, \eqref{fp} rephrases as 
$$
t\mapsto 1_{B}S^{\Lambda}_{t}1_{B}\,,\qquad S^{\Lambda}_{t}:=\Sin_{\Lambda}(t)-\Sin_{0}(t)\,.
$$
By Laplace transform again, we define the ideal operator
\be\label{FF}
F^{\Lambda}_{\lambda}:=\int_{0}^{\infty}e^{-\sqrt{\lambda}\, t} 1_{B}S^{\Lambda}_{t}1_{B}\,dt\,,\quad \lambda>\lambda_{\Lambda}\,.
\ee
The next Lemma shows that for any given $\varepsilon>0$ and $t_{\circ}>0$, one can choose a sufficiently large $\lambda$ such that the difference $F^{\Lambda}_{\lambda}-F^{\Lambda,t_{\circ},\varepsilon}_{\lambda}$ is as small (in uniform operator norm) as one likes. Taking into account \eqref{hyp} and \eqref{trig}, here we set $x^{-1}\sinh xt=\min\{t,1\}$ whenever $x=0$.
\begin{lemma}\label{estimate} Let  $\Delta_{\Lambda}\le \lambda_{\Lambda}$, $\lambda_{\Lambda}\ge 0$, be a semi-bounded singular perturbation in $L^{2}(\RE^{n})$ as defined in Theorem \ref{WO}.
For any $\lambda$ and $\lambda^{\circ}_{\Lambda}$ such that $\lambda\ge\lambda^{\circ}_{\Lambda}>\lambda_{\Lambda}$ one has 
\begin{align*}
\|F^{\Lambda}_{\lambda}-F^{\Lambda,t_{\circ},\varepsilon}_{\lambda}\|_{\bou(L^{2}(\RE^{n}))}\le \,\frac1{\sqrt\lambda}\left(c_{1}\,e^{-\sqrt\lambda\,t_{\circ}}+\varepsilon\left(c_{2}(1-e^{-\sqrt\lambda\,\varepsilon})+c_{3}\,e^{-\sqrt\lambda\,\varepsilon}\right)\right)
\end{align*}
where
$$
c_{1}=\frac{\lambda_\Lambda^{\circ}}{\lambda^{\circ}_{\Lambda}-\lambda_{\Lambda}}\left(\frac{\cosh(\sqrt{\lambda_{\Lambda}}\,t_{\circ})}{\sqrt{\lambda_\Lambda^{\circ}}}+ \frac{\sinh(\sqrt{\lambda_{\Lambda}}\,t_{\circ})}{\sqrt{\lambda_{\Lambda}}}\,\right)
+\left(\frac1{\sqrt{\lambda_\Lambda^{\circ}}}+\min\{t_{\circ},1\}\right)\,,
$$
$$
c_{2}=\cosh (\sqrt{\lambda_{\Lambda}}\,\varepsilon)+\frac{\sinh(\sqrt{\lambda_{\Lambda}}\,\varepsilon)}{\sqrt{\lambda_{\Lambda}}\,\varepsilon}+2\,,
$$
$$
c_{3}=\cosh (\sqrt{\lambda_{\Lambda}}\,t_{\circ})+1\,.
$$
\end{lemma}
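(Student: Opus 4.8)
The plan is to reduce everything to scalar estimates on the norms of the operator families $\Sin_{\Diamond}(t)$ and $\Cos_{\Diamond}(t)$, for which the bounds \eqref{hyp}, \eqref{trig} are available (recall $\lambda_{0}=0$). Writing $\tilde S_{t}:=1_{B}S^{\Lambda}_{t}1_{B}$ and, since $u^{\Diamond}_{f,\varepsilon}$ is a convolution, $\tilde S^{\varepsilon}_{t}:=1_{B}S^{\Lambda,\varepsilon}_{t}1_{B}=\int_{0}^{t}\tilde S_{t-s}\,\chi_{\varepsilon}(s)\,ds$, I would first split
\[
F^{\Lambda}_{\lambda}-F^{\Lambda,t_{\circ},\varepsilon}_{\lambda}
=\underbrace{\int_{t_{\circ}}^{\infty}e^{-\sqrt\lambda\,t}\,\tilde S_{t}\,dt}_{=:\,T}
\;+\;\underbrace{\int_{0}^{t_{\circ}}e^{-\sqrt\lambda\,t}\big(\tilde S_{t}-\tilde S^{\varepsilon}_{t}\big)\,dt}_{=:\,D}\,,
\]
so that $T$ carries the finite-measurement-time error (producing the $c_{1}e^{-\sqrt\lambda t_{\circ}}$ term) and $D$ the finite-pulse-width error (producing the $\varepsilon(\cdots)$ term). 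In every estimate I pass the norm inside the integral, use $\|1_{B}\|\le1$, and bound $\|S^{\Lambda}_{t}\|\le\|\Sin_{\Lambda}(t)\|+\|\Sin_{0}(t)\|$.

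For $T$ I would substitute $t=t_{\circ}+r$ and use the addition formulas for $\sinh(\sqrt{\lambda_{\Lambda}}(t_{\circ}+r))$ together with $\min\{t_{\circ}+r,1\}\le\min\{t_{\circ},1\}+r$, so that the problem reduces to the elementary integrals $\int_{0}^{\infty}e^{-\sqrt\lambda r}\cosh(\sqrt{\lambda_{\Lambda}}r)\,dr=\sqrt\lambda/(\lambda-\lambda_{\Lambda})$ and $\int_{0}^{\infty}e^{-\sqrt\lambda r}\sinh(\sqrt{\lambda_{\Lambda}}r)\,dr=\sqrt{\lambda_{\Lambda}}/(\lambda-\lambda_{\Lambda})$, plus $\int_{0}^{\infty}e^{-\sqrt\lambda r}(\min\{t_{\circ},1\}+r)\,dr=\min\{t_{\circ},1\}/\sqrt\lambda+1/\lambda$. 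This yields the common factor $e^{-\sqrt\lambda t_{\circ}}$ and prefactors $\sqrt\lambda/(\lambda-\lambda_{\Lambda})$ and $\lambda/(\lambda-\lambda_{\Lambda})$; the crucial point is that both are decreasing in $\lambda$ on $(\lambda_{\Lambda},\infty)$, hence on $\lambda\ge\lambda^{\circ}_{\Lambda}$ they are dominated by their values $\sqrt{\lambda^{\circ}_{\Lambda}}/(\lambda^{\circ}_{\Lambda}-\lambda_{\Lambda})$ and $\lambda^{\circ}_{\Lambda}/(\lambda^{\circ}_{\Lambda}-\lambda_{\Lambda})$ at $\lambda^{\circ}_{\Lambda}$. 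Pulling out $1/\sqrt\lambda$ (and using $1/\sqrt\lambda\le1/\sqrt{\lambda^{\circ}_{\Lambda}}$ for the free part coming from $\Sin_{0}$) reassembles exactly $\|T\|\le\frac1{\sqrt\lambda}\,c_{1}\,e^{-\sqrt\lambda t_{\circ}}$.

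For $D$ I would split $[0,t_{\circ}]=[0,\varepsilon]\cup[\varepsilon,t_{\circ}]$. On $[\varepsilon,t_{\circ}]$ the pulse support is exhausted, so $\int_{0}^{\varepsilon}\chi_{\varepsilon}=1$ gives the cancellation $\tilde S_{t}-\tilde S^{\varepsilon}_{t}=\int_{0}^{\varepsilon}(\tilde S_{t}-\tilde S_{t-s})\chi_{\varepsilon}(s)\,ds$; writing $\Sin_{\Diamond}(t)-\Sin_{\Diamond}(t-s)=\int_{t-s}^{t}\Cos_{\Diamond}(r)\,dr$ and using $\|\Cos_{\Lambda}(r)\|\le\cosh(\sqrt{\lambda_{\Lambda}}r)$, $\|\Cos_{0}(r)\|\le1$ yields $\|\tilde S_{t}-\tilde S^{\varepsilon}_{t}\|\le\varepsilon(\cosh(\sqrt{\lambda_{\Lambda}}t)+1)\le\varepsilon\,c_{3}$, and integrating $e^{-\sqrt\lambda t}$ over $[\varepsilon,\infty)$ gives the term $\frac{\varepsilon}{\sqrt\lambda}\,c_{3}\,e^{-\sqrt\lambda\varepsilon}$. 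On $[0,\varepsilon]$ the cancellation is not available, so I would bound $\tilde S_{t}$ and $\tilde S^{\varepsilon}_{t}$ separately: since $\sinh(x)/x$ is increasing one has, for $t\le\varepsilon$, $\|\tilde S_{t}\|\le t\big(\sinh(\sqrt{\lambda_{\Lambda}}\varepsilon)/(\sqrt{\lambda_{\Lambda}}\varepsilon)+1\big)$, and the same bound for $\|\tilde S^{\varepsilon}_{t}\|$ follows from the convolution inequality and $\int_{0}^{t}\chi_{\varepsilon}\le1$; adding them and replacing one $\sinh(x)/x$ by the larger $\cosh(x)$ gives $\|\tilde S_{t}-\tilde S^{\varepsilon}_{t}\|\le c_{2}\,t$, whence $\int_{0}^{\varepsilon}e^{-\sqrt\lambda t}c_{2}t\,dt\le\frac{\varepsilon}{\sqrt\lambda}\,c_{2}(1-e^{-\sqrt\lambda\varepsilon})$. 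Summing the three contributions produces the stated bound.

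The main obstacle is the tail estimate $T$: one must compute the hyperbolic Laplace integrals exactly and then exploit their monotonicity in $\lambda$ to trade the $\lambda$-dependent prefactors for the $\lambda^{\circ}_{\Lambda}$-constants, which is exactly the mechanism making $c_{1}$ independent of $\lambda$ and rendering the whole estimate uniform for $\lambda\ge\lambda^{\circ}_{\Lambda}$. A secondary subtlety is the loss of the convolution cancellation on $[0,\varepsilon]$, which forces the separate small-time bounds and the inequality $\sinh(x)/x\le\cosh(x)$ needed to match $c_{2}$.
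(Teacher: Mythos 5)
Your proposal is correct and follows essentially the same route as the paper: the same three-way splitting (tail over $[t_{\circ},\infty)$, pulse error over $[0,\varepsilon]$, pulse error over $[\varepsilon,t_{\circ}]$), the same use of an addition formula plus monotonicity of $\lambda\mapsto\lambda/(\lambda-\lambda_{\Lambda})$ and $\lambda\mapsto\sqrt\lambda/(\lambda-\lambda_{\Lambda})$ to obtain the $\lambda$-independent constant $c_{1}$, and the same constants $c_{2}$, $c_{3}$. The only cosmetic differences are that the paper executes the tail estimate at the operator level (the identity $\Sin_{\Diamond}(t+t_{\circ})=\Cos_{\Diamond}(t_{\circ})\Sin_{\Diamond}(t)+\Sin_{\Diamond}(t_{\circ})\Cos_{\Diamond}(t)$ together with \eqref{cos}, \eqref{sin} and $\|R^{\Diamond}_{\lambda}\|\le(\lambda-\lambda_{\Diamond})^{-1}$) where you use scalar hyperbolic addition formulas and explicit Laplace integrals, and on $[0,\varepsilon]$ the paper retains the partial cancellation $\Sin_{\Diamond}(t-s)-\Sin_{\Diamond}(t)$ weighted by $1-\int_{0}^{t}\chi_{\varepsilon}$ where you bound the two terms separately; both variants yield identical bounds.
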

\begin{proof} Let us re-write the difference $F^{\Lambda}_{\lambda}-F^{\Lambda,t_{\circ},\varepsilon}_{\lambda}$
as
\begin{align*}
&F^{\Lambda}_{\lambda}-F^{\Lambda,t_{\circ},\varepsilon}_{\lambda}=\int_{t_{\circ}}^{\infty}e^{-\sqrt\lambda\, t}\,1_{B}S^{\Lambda}_{t}1_{B}\,dt\\
+&\int_{0}^{\varepsilon}e^{-\sqrt\lambda\, t}\,1_{B}(S^{\Lambda}_{t}-S^{\Lambda,\varepsilon}_{t})1_{B}\,dt+\int_{\varepsilon}^{t_{\circ}}e^{-\sqrt\lambda\, t}\,1_{B}(S^{\Lambda}_{t}-S^{\Lambda,\varepsilon}_{t})1_{B}\,dt\\
=&I_{1}+I_{2}+I_{3}\,.
\end{align*}
By \eqref{cos}, \eqref{sin} and by the identity (see \cite[equation (3.95)]{A})
$$
\Sin_{\Diamond}(t+{t_{\circ}})=\Cos_{\Diamond}({t_{\circ}})\Sin_{\Diamond}(t)+\Sin_{\Diamond}({t_{\circ}})\Cos_{\Diamond}(t)\,,
$$
one gets
\begin{align*}
I_{1}=&e^{-\sqrt\lambda\, {t_{\circ}}}
\int_{0}^{\infty}e^{-\sqrt\lambda\, t}\,1_{B}S^{\Lambda}_{t+{t_{\circ}}}1_{B}\,dt\\
=&e^{-\sqrt\lambda\,{t_{\circ}}}\,1_{B}\big((\Cos_{\Lambda}({t_{\circ}})+\sqrt\lambda\,\Sin_{\Lambda}({t_{\circ}}))R^{\Lambda}_{\lambda}-(\Cos_{0}({t_{\circ}})+\sqrt\lambda\,\Sin_{0}({t_{\circ}}))R^{0}_{\lambda}\big)1_{B}\,.
\end{align*}
By $\|R^{\Diamond}_{\lambda}\|_{\bou(L^{2}(\RE^{n}))}\le (\lambda-\lambda_{\Diamond})^{-1}$ and by \eqref{hyp}, \eqref{trig}, one gets
\begin{align*}
&\|I_{1}\|_{\bou(L^{2}(\RE^{n}))}\\
\le &e^{-\sqrt\lambda\, {t_{\circ}}}\left(\frac1{\lambda-\lambda_{\Lambda}}\left(\cosh(\sqrt{\lambda_{\Lambda}}\,t_{\circ})+\sqrt{\lambda}\ \frac{\sinh(\sqrt{\lambda_{\Lambda}}\,t_{\circ})}{\sqrt{\lambda_{\Lambda}}}\right)
+\frac1{\lambda}\left(1+\sqrt{\lambda}\,\min\{t_{\circ},1\}\right)
\right)\\
\le &\frac{e^{-\sqrt\lambda\, {t_{\circ}}}}{\sqrt\lambda}\left(\frac1{1-\lambda_{\Lambda}/\lambda}\left(\frac{\cosh(\sqrt{\lambda_{\Lambda}}\,t_{\circ})}{\sqrt\lambda}+ \frac{\sinh(\sqrt{\lambda_{\Lambda}}\,t_{\circ})}{\sqrt{\lambda_{\Lambda}}}\right)
+\left(\frac1{\sqrt\lambda}+\min\{t_{\circ},1\}\right)
\right)
\end{align*}
By 
$$\mbox{Sin}_{\Diamond}(t)=\int_{0}^{t}\mbox{Cos}_{\Diamond}(s)\,ds
$$
(see \cite[equation (3.93)]{A}), 
one gets
\begin{align*}
&\|u^{\Diamond}_{f,\varepsilon}(t)-\mbox{Sin}_{\Diamond}(t)f\|_{L^{2}(\RE^{n})}\\
\le& \int_{0}^{t}\|\mbox{Sin}_{\Diamond}(t-s)-\mbox{Sin}_{\Diamond}(t)\|_{\bou(L^{2}(\RE^{n}))}\,\chi_{\varepsilon}(s)\,ds\ \|f\|_{L^{2}(\RE^{n})}\\
&+\|\mbox{Sin}_{\Diamond}(t)\|_{\bou(L^{2}(\RE^{n}))}\left(1-\int_{0}^{t}\chi_{\varepsilon}(s)\,ds\right)\|f\|_{L^{2}(\RE^{n})}\,
\\
\le&
\left(\varepsilon \,\cosh (\sqrt{\lambda_{\Diamond}}\,t)\int_{0}^{t}\chi_{\varepsilon}(s)\,ds+\frac{\sinh(\sqrt{\lambda_{\Diamond}}\,t)}{\sqrt{\lambda_{\Diamond}}}\left(1-\int_{0}^{t}\chi_{\varepsilon}(s)\,ds\right)\right)\|f\|_{L^{2}(\RE^{n})}\,.
\end{align*}
Thus
$$
\|I_{2}\|_{\bou(L^{2}(\RE^{n}))}\le \left(\varepsilon \,\cosh (\sqrt{\lambda_{\Lambda}}\,\varepsilon)+\frac{\sinh(\sqrt{\lambda_{\Lambda}}\,\varepsilon)}{\sqrt{\lambda_{\Lambda}}}+2\varepsilon\right)\,\frac{1-e^{-\sqrt\lambda\,\varepsilon}}{\sqrt\lambda}
$$
and
$$
\|I_{3}\|_{\bou(L^{2}(\RE^{n}))}\le \left(\varepsilon \,\cosh (\sqrt{\lambda_{\Lambda}}\,t_{\circ})+\varepsilon\right)\,\frac{e^{-\sqrt\lambda\,\varepsilon}-e^{-\sqrt\lambda\,t_{\circ}}}{\sqrt\lambda}\,.
$$

\end{proof}

\section{Inverse scattering in the time domain.}

\begin{lemma}\label{compact} Let $\Lambda_{z}=P^{*}(PM_{z}P^{*})^{-1}P$ define the self-adjoint operator $\Delta_{\Lambda}$ as in Theorem \ref{WO} and assume that the embedding $\ran(\Lambda_{\lambda})\hookrightarrow\K^{*}$ be compact. Let $\lambda>\lambda_{\Lambda}$, $B\subset\subset\RE^{n}\backslash\overline\Omega$ open and bounded,  and let $F^{\Lambda}_{\lambda}$ be defined as in \eqref{FF}. Then
\be\label{spectrum}
\sigma_{\rm disc}(  F_{\lambda}^{\Lambda})=\sigma(  F_{\lambda}^{\Lambda})  \backslash\{  0\}
=\{\mu^{\Lambda}_{\lambda,k}\}_{1}^{\infty}\subset\RE\backslash\{0\}\,,\quad\lim_{k\to\infty}\mu^{\Lambda}_{\lambda,k}=0\,,
\ee
and there exists an orthonormal sequence $\{v^{\Lambda}_{\lambda,k}\}_{1}^{\infty}\subset L^{2}(B)$ such that, for every $u\in L^{2}(B)$, 
\be\label{sp-res-1}
u=u_{0}+\sum_{k=1}^{\infty}\langle v_{\lambda,k}^{\Lambda},u\rangle_{L^{2}(B)}\,v_{\lambda,k}^{\Lambda}\,,\quad\text{ $u_{0}\in\ker(F_{\lambda}^{\Lambda})$.}
\ee
Moreover
\be\label{sp-res-2}
F_{\lambda}^{\Lambda}
=\sum_{k=1}^{\infty}\mu_{\lambda,k}^{\Lambda}\,v_{\lambda,k}^{\Lambda}\otimes v_{\lambda,k}^{\Lambda}\,.
\ee
\end{lemma}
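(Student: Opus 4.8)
The plan is to recognize $F^{\Lambda}_{\lambda}$ as a compactly cut-off, factorized resolvent difference and then invoke the spectral theorem for compact self-adjoint operators. First I would pass from the time-domain definition \eqref{FF} to a resolvent expression. Since $\lambda>\lambda_{\Lambda}\ge 0$ guarantees $\lambda\in Z_{M}=\rho(\Delta_{\Lambda})\cap\CO\backslash(-\infty,0]$ as well as $\lambda\in\rho(\Delta)$, the bound \eqref{hyp} makes the integrand $e^{-\sqrt\lambda\,t}\Sin_{\Diamond}(t)$ norm-integrable for both $\Diamond=\Lambda$ and $\Diamond=0$; interchanging the projections $1_{B}$ with the norm-convergent integral and using \eqref{sin} gives
\[
F^{\Lambda}_{\lambda}=1_{B}\Big(\int_{0}^{\infty}e^{-\sqrt\lambda\,t}\big(\Sin_{\Lambda}(t)-\Sin_{0}(t)\big)\,dt\Big)1_{B}=1_{B}\big(R^{\Lambda}_{\lambda}-R^{0}_{\lambda}\big)1_{B}\,.
\]
Inserting the resolvent formula \eqref{resolvent} at the real point $z=\lambda$ (so $\bar z=\lambda$) collapses the difference to the factorized form
\[
F^{\Lambda}_{\lambda}=1_{B}\,G_{\lambda}\,\Lambda_{\lambda}\,G^{*}_{\lambda}\,1_{B}\,,\qquad \Lambda_{\lambda}=P^{*}(PM_{\lambda}P^{*})^{-1}P\,.
\]

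Next I would establish the two structural properties feeding the spectral theorem. For self-adjointness, set $A:=G^{*}_{\lambda}1_{B}$, so that $A^{*}=1_{B}G_{\lambda}$ and $F^{\Lambda}_{\lambda}=A^{*}\Lambda_{\lambda}A$; by the first identity in \eqref{Lambda} the middle factor obeys $\Lambda_{\lambda}^{*}=\Lambda_{\bar\lambda}=\Lambda_{\lambda}$ at real $\lambda$, whence $(F^{\Lambda}_{\lambda})^{*}=A^{*}\Lambda_{\lambda}A=F^{\Lambda}_{\lambda}$ as an operator on $L^{2}(B)$. For compactness I would exploit the hypothesis directly: writing the chain of spaces $L^{2}(B)\xrightarrow{A}\K\hookrightarrow\X\xrightarrow{\Lambda_{\lambda}}\X^{*}$ and then, through the assumed compact embedding $j:\ran(\Lambda_{\lambda})\hookrightarrow\K^{*}$, into $\K^{*}\xrightarrow{G_{\lambda}}L^{2}\xrightarrow{1_{B}}L^{2}(B)$, I would factor $F^{\Lambda}_{\lambda}=(1_{B}G_{\lambda})\circ j\circ(\Lambda_{\lambda}A)$ with $\Lambda_{\lambda}A$ and $1_{B}G_{\lambda}$ bounded and $j$ compact. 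A product containing one compact factor is compact, so $F^{\Lambda}_{\lambda}$ is a compact self-adjoint operator on $L^{2}(B)$.

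With these two facts in hand, the conclusions are the Riesz--Schauder spectral theorem for compact self-adjoint operators applied on the Hilbert space $L^{2}(B)$: the nonzero spectrum consists of real eigenvalues of finite multiplicity accumulating only at $0$, giving \eqref{spectrum}; choosing an orthonormal system $\{v^{\Lambda}_{\lambda,k}\}$ of associated eigenvectors yields the norm-convergent decomposition \eqref{sp-res-2}; and \eqref{sp-res-1} is just the orthogonal splitting $L^{2}(B)=\ker(F^{\Lambda}_{\lambda})\oplus(\ker F^{\Lambda}_{\lambda})^{\perp}$, where $\{v^{\Lambda}_{\lambda,k}\}$ is an orthonormal basis of $(\ker F^{\Lambda}_{\lambda})^{\perp}=\overline{\ran(F^{\Lambda}_{\lambda})}$ and $u_{0}$ is the $\ker(F^{\Lambda}_{\lambda})$-component of $u$.

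I expect the only substantive obstacle to be the compactness step, and specifically the bookkeeping that places the compact embedding $\ran(\Lambda_{\lambda})\hookrightarrow\K^{*}$ at the correct point of the chain $\X\to\X^{*}\hookrightarrow\K^{*}\to L^{2}$; the compactness itself is handed over by hypothesis (reflecting, in the concrete boundary realizations, the smoothing of $\Lambda_{\lambda}$ together with the compact Sobolev embeddings on $\Gamma$). The interchange of $1_{B}$ with the time integral and the self-adjointness are routine once \eqref{sin} and \eqref{Lambda} are invoked.
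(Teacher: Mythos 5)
Your proposal is correct and follows essentially the same route as the paper: pass via \eqref{sin} and the resolvent formula \eqref{resolvent} to the factorization $F^{\Lambda}_{\lambda}=1_{B}G_{\lambda}\Lambda_{\lambda}G^{*}_{\lambda}1_{B}$, deduce compactness from the assumed compact embedding $\ran(\Lambda_{\lambda})\hookrightarrow\K^{*}$, and invoke the spectral theorem for compact self-adjoint operators. The paper states these steps more tersely; your version merely fills in the self-adjointness check via \eqref{Lambda} and the norm-integrability justifying the interchange with the time integral.
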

\begin{proof} By \eqref{sin} and the resolvent formula \eqref{resolvent}, one gets $F^{\Lambda}_{\lambda}=1_{B}G_{\lambda}\Lambda_{\lambda}G^{*}_{\lambda}1_{B}$. Thus, the compactness of  the embedding $\ran(\Lambda_{\lambda})\hookrightarrow\K^{*}$ implies that $F^{\Lambda}_{\lambda}$ is compact. Hence \eqref{spectrum}, \eqref{sp-res-1}, \eqref{sp-res-2} are consequence of the spectral theory for compact self-adjoint operators (see, e.g., \cite[Section 6]{Jorg}).
\end{proof}
\subsection{Obstacles reconstruction.}
Before stating our results, let us introduce the following family of functions in 
$L^{2}(\RE^{n})$:
\be\label{test}
g_{\lambda}^{x}(y):=\frac{\lambda^{n/2-1}}{(2\pi)^{n/2}}\,\frac{K_{\frac{n-2}2}(\sqrt\lambda\,\|x-y\|)}{\|\sqrt\lambda\,(x-y)\|^{n/2-2}}\,,
\ee
where $K_{\nu}$ denotes the modified Bessel function of the third kind of order $\nu$. Notice that  $g^{x}_{\lambda}$ identifies with the fundamental solution of $(-\Delta+\lambda)$; in particular, whenever $n=3$, 
$$
g_{\lambda}^{x}(y):=\frac{e^{-\sqrt\lambda\,\|x-y\|}}{4\pi\|x-y\|} \,.
$$
\begin{theorem}\label{thm} Let $\Delta_{\Lambda}\le\lambda_{\Lambda}$, $\lambda_{\Lambda}\ge 0$, be defined as in Theorem \ref{WO} with $M_{\lambda}\in\bou({\X^{s}_{\sharp}}^{*},\X^{s}_{\sharp})$, 
$\lambda>\lambda_{\Lambda}$, and 
$P=1_{\X^{s}_{\sharp}}$.
Given $\lambda>\lambda_{\Lambda}$, $B\subset\subset\RE^{n}\backslash\overline\Omega$ open and bounded,  let $F^{\Lambda}_{\lambda}$ be defined as in \eqref{FF}. \par
If $M_{\lambda}$ is coercive, i.e., there exists $c_{\lambda}>0$ such that 
\be\label{coerc}
\forall\phi\in{\X^{s}_{\sharp}}^{*}\,,\qquad\big|\langle \phi,M_{\lambda}\phi\rangle_{ {\X^{s}_{\sharp}}^{*}\!,\X^{s}_{\sharp}}\big|\ge c_{\lambda}\,\|\phi\|^{2}_{{\X^{s}_{\sharp}}^{*}} \,,
\ee
then
\be\label{inf}
x\in\Omega\iff\inf_{\substack{u\in L^{2}(B)\\ \langle u,g^{x}_{\lambda}\rangle_{L^{2}(B)}=1}}\left|\langle u,F^{\Lambda}_{\lambda}u\rangle_{L^{2}(B)}\right|>0\,;
\ee
if $M_{\lambda}$ is sign-definite, i.e., there exists $c_{\lambda}>0$ such that one of the two inequalities
\be\label{s-d}
\forall\phi\in{\X^{s}_{\sharp}}^{*}\,,\qquad \pm\langle \phi,M_{\lambda}\phi\rangle_{ {\X^{s}_{\sharp}}^{*}\!,\X^{s}_{\sharp}}\ge c_{\lambda}\,\|\phi\|^{2}_{{\X^{s}_{\sharp}}^{*}} 
\ee
 holds, then
\be\label{1/4}
x\in\Omega\iff\sum_{k=1}^{\infty}\,\frac{|\langle g^{x}_{\lambda},v^{\Lambda}_{\lambda,k}\rangle_{L^{2}(B)}|^{2}}{|\mu^{\Lambda}_{\lambda,k}|}<+\infty\,,
\ee
where the sequences $\{\mu^{\Lambda}_{\lambda,k}\}_{1}^{\infty}$, $\{v^{\Lambda}_{\lambda,k}\}_{1}^{\infty}$ providing the spectral resolution of $F^{\Lambda}_{\lambda}$ are given  in Lemma \ref{compact}.
\end {theorem}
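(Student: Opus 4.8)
The plan is to realize Theorem \ref{thm} as an instance of the static factorization method applied to the factorized data operator. First I would invoke Lemma \ref{compact}, which gives $F^{\Lambda}_{\lambda}=1_{B}G_{\lambda}\Lambda_{\lambda}G^{*}_{\lambda}1_{B}$; since here $P=1_{\X^{s}_{\sharp}}$, the middle factor is $\Lambda_{\lambda}=M_{\lambda}^{-1}\in\B(\X^{s}_{\sharp},{\X^{s}_{\sharp}}^{*})$. Writing $A:=1_{B}G^{\sharp}_{\lambda}:{\X^{s}_{\sharp}}^{*}\to L^{2}(B)$, so that $A^{*}=G^{*}_{\lambda}1_{B}$, one obtains the symmetric factorization $F^{\Lambda}_{\lambda}=A\,\Lambda_{\lambda}\,A^{*}$. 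The whole proof then reduces to the geometric range identity
\be\label{rangeid}
g^{x}_{\lambda}|_{B}\in\ran(A)\iff x\in\Omega\,,
\ee
together with abstract characterizations of $\ran(A)$ in terms of the spectral data of $F^{\Lambda}_{\lambda}$.

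I would begin by transferring the hypotheses on $M_{\lambda}$ to the middle operator $\Lambda_{\lambda}$. Using $M_{\lambda}^{*}=M_{\lambda}$ (real $\lambda$, from \eqref{MM}) and bounded invertibility, the coercivity \eqref{coerc} of $M_{\lambda}$ yields coercivity of $\Lambda_{\lambda}=M_{\lambda}^{-1}$, and the sign-definiteness \eqref{s-d} passes to $\Lambda_{\lambda}$ with the same sign; this is a short Lax--Milgram-type computation. With a coercive middle factor, the abstract range theory (of Kirsch--Grinberg type, as already exploited in \cite{MP-inv}) applies to $F^{\Lambda}_{\lambda}=A\Lambda_{\lambda}A^{*}$. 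In the merely coercive case it furnishes the inf-criterion $g\in\ran(A)\iff\inf\{|\langle u,F^{\Lambda}_{\lambda}u\rangle_{L^{2}(B)}|:\langle u,g\rangle_{L^{2}(B)}=1\}>0$, which is exactly \eqref{inf}. In the sign-definite case it gives $\ran(A)=\ran(|F^{\Lambda}_{\lambda}|^{1/2})$; since $F^{\Lambda}_{\lambda}=\sum_{k}\mu^{\Lambda}_{\lambda,k}\,v^{\Lambda}_{\lambda,k}\otimes v^{\Lambda}_{\lambda,k}$ is then semidefinite, $|F^{\Lambda}_{\lambda}|^{1/2}=\sum_{k}|\mu^{\Lambda}_{\lambda,k}|^{1/2}\,v^{\Lambda}_{\lambda,k}\otimes v^{\Lambda}_{\lambda,k}$, and the Picard criterion characterizes $g\in\ran(|F^{\Lambda}_{\lambda}|^{1/2})$ by the convergence of the series in \eqref{1/4}. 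These are functional-analytic facts I would cite rather than reprove.

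The substantive step, and the main obstacle, is the geometric identity \eqref{rangeid}, which depends only on the layer potential $G^{\sharp}_{\lambda}$ and hence holds uniformly over all the admissible $M_{\lambda}$. For the implication $x\in\Omega\Rightarrow g^{x}_{\lambda}|_{B}\in\ran(A)$: when the singularity $x$ lies in $\Omega$, the function $g^{x}_{\lambda}$ solves $(-\Delta+\lambda)u=0$ in the connected exterior $\RE^{n}\backslash\overline\Omega\supset B$, decays at infinity, and has smooth exterior traces on $\Gamma$; the invertibility for $\lambda>0$ of the boundary operator $\gamma_{0}\SL_{\lambda}$ (for $\sharp=D$) or $\gamma_{1}\DL_{\lambda}$ (for $\sharp=N$) lets me solve for a density $\phi_{x}$ matching the relevant trace of $g^{x}_{\lambda}$. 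By uniqueness of the exterior boundary value problem for $-\Delta+\lambda$, one gets $G^{\sharp}_{\lambda}\phi_{x}=g^{x}_{\lambda}$ throughout $\RE^{n}\backslash\overline\Omega$, hence on $B$; checking $\phi_{x}\in{\X^{s}_{\sharp}}^{*}$ for the admissible range of $s$ (which follows from the smoothness of the data) places $g^{x}_{\lambda}|_{B}$ in $\ran(A)$. For the converse I would argue by contradiction: if $g^{x}_{\lambda}|_{B}=A\phi$ with $x\notin\overline\Omega$, then $g^{x}_{\lambda}$ and the layer potential $G^{\sharp}_{\lambda}\phi$ are solutions of $(-\Delta+\lambda)u=0$ coinciding on $B$; since $(\RE^{n}\backslash\overline\Omega)\setminus\{x\}$ is connected, elliptic unique continuation forces them to agree up to $x$, contradicting the fact that $g^{x}_{\lambda}$ is singular at $x$ while $G^{\sharp}_{\lambda}\phi$ is smooth there. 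The borderline case $x\in\Gamma$ is treated separately via the jump relations, which would again require $\phi$ to carry a point singularity incompatible with $\phi\in{\X^{s}_{\sharp}}^{*}$.

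Finally I would assemble the pieces: the equivalence \eqref{rangeid} converts the functional-analytic criteria of the second paragraph into the geometric statements \eqref{inf} and \eqref{1/4}. I expect the functional-analytic input to be largely citable from \cite{MP-inv} and the factorization-method literature, so that the genuinely new work is the mapping-property check placing $\phi_{x}$ in ${\X^{s}_{\sharp}}^{*}$ together with the unique-continuation argument; both are carried out once and apply uniformly across the Dirichlet, Neumann and semi-transparent boundary conditions thanks to the common single/double-layer structure of $G^{\sharp}_{\lambda}$.
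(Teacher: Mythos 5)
Your proposal is correct and follows essentially the same route as the paper: factorize $F^{\Lambda}_{\lambda}$ via the resolvent formula, apply the Kirsch--Grinberg inf-criterion and range identity $\ran(A)=\ran((F^{\Lambda}_{\lambda})^{1/2})$, and reduce everything to the geometric statement $x\in\Omega\iff 1_{B}g^{x}_{\lambda}\in\ran(1_{B}G^{\sharp}_{\lambda})$, which the paper isolates as Lemma \ref{FM} and proves exactly as you sketch (exterior boundary-value problem for one direction, unique continuation against the point singularity for the other). The only cosmetic difference is that you place $M_{\lambda}^{-1}$ in the middle of the factorization and transfer coercivity to it, whereas the paper writes $F^{\Lambda}_{\lambda}=(1_{B}G^{\sharp}_{\lambda}M^{-1}_{\lambda})M_{\lambda}(1_{B}G^{\sharp}_{\lambda}M^{-1}_{\lambda})^{*}$ so that the coercive middle operator is $M_{\lambda}$ itself, which is equivalent since $\ran(1_{B}G^{\sharp}_{\lambda}M^{-1}_{\lambda})=\ran(1_{B}G^{\sharp}_{\lambda})$.
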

\begin{proof}
By \eqref{sin} and by the resolvent formula \eqref{resolvent}, one gets the factorized representation
\be\label{factor}
F^{\Lambda}_{\lambda}=1_{B}G^{\sharp}_{\lambda}M^{-1}_{\lambda}{G_{\lambda}^{\sharp}}^{*}1_{B}=(1_{B}G^{\sharp}_{\lambda}M^{-1}_{\lambda})M_{\lambda}(1_{B}G_{\lambda}^{\sharp}M_{\lambda}^{-1})^{*}\,.
\ee
Then \eqref{inf} is consequence, by the inf-criterion in \cite[Theorem 1.16]{KG}, of the coercivity hypothesis about $M_{\lambda}$ and Lemma \ref{FM} in the Appendix.\par
Suppose now that $M_{\lambda}>0$ (the case $M_{\lambda}<0$ is similar, simply replace $F^{\Lambda}_{\lambda}$ by $-F^{\Lambda}_{\lambda}$) then, by \eqref{factor} and \cite[Corollary 1.22]{KG}, one has that 
$\ran(1_{B}G_{\lambda}M^{-1}_{\lambda})=\ran(1_{B}G_{\lambda})=\ran((F^{\Lambda}_{\lambda})^{1/2})$.  Thus, by Lemma \ref{FM}, $x\in\Omega$ if and only if $1_{B}g^{x}_{\lambda}\in\ran((F^{\Lambda}_{\lambda})^{1/2})=\dom((F^{\Lambda}_{\lambda})^{-1/2})$. By \eqref{sp-res-2}, the latter is equivalent to the convergence of the series in \eqref{1/4}. Indeed Lemma \ref{compact} applies since $\ran(\Lambda_{\lambda})=\dom(M_{\lambda})=
{X_{\sharp}^{s}}^{*}$ is compactly embedded in $\K^{*}_{\sharp}$.
\end{proof}
\subsection{Screens reconstruction.} Here we consider the case where the boundary conditions are assigned on a relatively open subset $\Sigma$ of the boundary $\Gamma$ of the domain $\Omega$. 
We introduce the spaces
$$
\widetilde\X_{D}^{s}:=H^{1/2-s}(\Sigma)\,,\qquad \widetilde\X_{N}^{s}:=H^{-1/2-s}(\Sigma) \,,$$
so that 
$$
(\widetilde\X_{D}^{s})^{*}:=H^{s-1/2}_{\overline\Sigma}(\Gamma)\,,\qquad (\widetilde\X_{N}^{s})^{*}:=H^{s+1/2}_{\overline\Sigma}(\Gamma) \,,\quad 0\le s\le 1/2\,.
$$
\begin{theorem}\label{thm-s} Let $\Delta_{\Lambda}\le\lambda_{\Lambda}$, $\lambda_{\Lambda}\ge 0$, be defined as in Theorem \ref{WO} with $M_{\lambda}\in\bou({\X^{s}_{\sharp}}^{*},\X^{s}_{\sharp})$, 
$\lambda>\lambda_{\Lambda}$, $\Y=\widetilde\X^{s}_{\sharp}$ and
$P=R_{\Sigma}$. Given $\lambda>\lambda_{\Lambda}$, $B\subset\subset\RE^{n}\backslash\overline\Omega$ open and bounded,  let $F^{\Lambda}_{\lambda}$ be defined as in \eqref{FF}. Let $\Sigma_{\circ}\subset\Gamma_{\!\circ}$ be a relatively open subset, with a Lipschitz boundary, of the Lipschitz hypersurface $\Gamma_{\!\circ}$.\par
If $M_{\lambda}$ is coercive, i.e., \eqref{coerc} holds, then
\be\label{inf-s}
\Sigma_{\circ}\subset\Sigma\iff\inf_{\substack{u\in L^{2}(B)\\ \langle u,g^{\Sigma_{\circ}}_{\lambda}\rangle_{L^{2}(B)}=1}}\left|\langle u,F^{\Lambda}_{\lambda}u\rangle_{L^{2}(B)}\right|>0\,,
\ee
where $g_{\lambda}^{\Sigma_{\circ}}(y):=\int_{\Sigma_{\circ}}g^{x}_{\lambda}(y)\,d\sigma_{\Gamma_{\!\circ}}(x)$; if $M_{\lambda}$ is sign-definite, i.e., one of the two inequalities in \eqref{s-d} holds, then
\be\label{1/4-s}
\Sigma_{\circ}\subset\Sigma\iff\sum_{k=1}^{\infty}\,\frac{|\langle g^{\Sigma_{\circ}}_{\lambda},v^{\Lambda}_{\lambda,k}\rangle_{L^{2}(B)}|^{2}}{|\mu^{\Lambda}_{\lambda,k}|}<+\infty\,,
\ee
where the sequences $\{\mu^{\Lambda}_{\lambda,k}\}_{1}^{\infty}$, $\{v^{\Lambda}_{\lambda,k}\}_{1}^{\infty}$ providing the spectral resolution of $F^{\Lambda}_{\lambda}$ are given in Lemma \ref{compact}.
\end {theorem}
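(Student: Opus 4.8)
The plan is to run the argument of Theorem~\ref{thm} with the identity $P=1_{\X^s_\sharp}$ replaced by the restriction $P=R_\Sigma$, and to reduce the screen-inclusion test $\Sigma_\circ\subset\Sigma$ to a range characterization for layer potentials whose densities are supported on $\overline\Sigma$. Setting $\mathcal M:=R_\Sigma M_\lambda R^*_\Sigma$ and $A:=1_B G^\sharp_\lambda R^*_\Sigma\mathcal M^{-1}$, formula \eqref{sin} together with the resolvent formula \eqref{resolvent} gives the symmetric factorization
\[
F^\Lambda_\lambda=1_B G^\sharp_\lambda R^*_\Sigma\,\mathcal M^{-1}\,R_\Sigma{G^\sharp_\lambda}^*1_B=A\,\mathcal M\,A^*\,,
\]
where I use $M_\lambda^*=M_\lambda$, whence $\mathcal M^*=\mathcal M$. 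Since $\mathcal M^{-1}\in\B(\Y,\Y^*)$ is a bounded bijection, $\ran(A)=\ran(1_B G^\sharp_\lambda R^*_\Sigma)$, the range of the single-layer (for $\sharp=D$) or double-layer (for $\sharp=N$) operator acting on densities in $(\widetilde\X^s_\sharp)^*$, a space of distributions supported on $\overline\Sigma$.

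Next I would transfer the spectral hypothesis on $M_\lambda$ to $\mathcal M$. Using the adjoint identity $\langle\psi,\mathcal M\psi\rangle_{\Y^*\!,\Y}=\langle R^*_\Sigma\psi,M_\lambda R^*_\Sigma\psi\rangle_{{\X^s_\sharp}^*\!,\X^s_\sharp}$ and the fact, recalled in Section~\ref{unclosed}, that $R^*_\Sigma$ embeds $(\widetilde\X^s_\sharp)^*$ isometrically into ${\X^s_\sharp}^*$ (it is literally $\phi\mapsto\phi$), coercivity \eqref{coerc} and sign-definiteness \eqref{s-d} of $M_\lambda$ pass to $\mathcal M$ with the same constant $c_\lambda$. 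With a coercive (resp.\ sign-definite) middle operator, the Kirsch--Grinberg theory applies exactly as in Theorem~\ref{thm}: the inf-criterion \cite[Theorem 1.16]{KG}, applied to the test element $1_B g^{\Sigma_\circ}_\lambda$, gives
\[
1_B g^{\Sigma_\circ}_\lambda\in\ran(A)\iff\inf_{\langle u,g^{\Sigma_\circ}_\lambda\rangle_{L^2(B)}=1}\big|\langle u,F^\Lambda_\lambda u\rangle_{L^2(B)}\big|>0\,,
\]
while in the sign-definite case \cite[Corollary 1.22]{KG} yields the range identity $\ran(A)=\ran((F^\Lambda_\lambda)^{1/2})$ (after replacing $F^\Lambda_\lambda$ by $-F^\Lambda_\lambda$ if $M_\lambda<0$). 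For the series criterion I must also check the hypothesis of Lemma~\ref{compact}: since $\ran(\Lambda_\lambda)\subseteq\ran(R^*_\Sigma)=(\widetilde\X^s_\sharp)^*\hookrightarrow{\X^s_\sharp}^*$, compactly embedded in $\K^*_\sharp$ (as already used in Theorem~\ref{thm}), the composition is compact, so the spectral resolution \eqref{sp-res-2} is available and $1_B g^{\Sigma_\circ}_\lambda\in\ran((F^\Lambda_\lambda)^{1/2})=\dom((F^\Lambda_\lambda)^{-1/2})$ is equivalent to convergence of the series in \eqref{1/4-s}.

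The decisive step is the screen counterpart of the range characterization provided by Lemma~\ref{FM} in the proof of Theorem~\ref{thm}, namely
\[
\Sigma_\circ\subset\Sigma\iff 1_B g^{\Sigma_\circ}_\lambda\in\ran(1_B G^\sharp_\lambda R^*_\Sigma)\,.
\]
Here $g^{\Sigma_\circ}_\lambda$ is the single-layer potential of the surface density $1_{\Sigma_\circ}$, so it solves $(-\Delta+\lambda)u=0$ off $\overline{\Sigma_\circ}$, whereas every element of $\ran(G^\sharp_\lambda R^*_\Sigma)$ is a layer potential with density in $(\widetilde\X^s_\sharp)^*$, hence $(-\Delta+\lambda)$-harmonic off $\overline\Sigma$. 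The implication from left to right is immediate: if $\Sigma_\circ\subset\Sigma$ then $1_{\Sigma_\circ}$ is supported in $\overline\Sigma$, so the potential already lies in the range. For the converse, by contraposition, a representation $1_B g^{\Sigma_\circ}_\lambda=1_B G^\sharp_\lambda R^*_\Sigma\psi$ forces $g^{\Sigma_\circ}_\lambda$ and $G^\sharp_\lambda R^*_\Sigma\psi$ to agree on $B$, hence, by unique continuation for $(-\Delta+\lambda)$ in the connected $\Omega_\+$, on the whole region where both are harmonic; since the latter is smooth across any relatively open part of $\Sigma_\circ$ disjoint from $\overline\Sigma$, the single-layer jump relation forces $1_{\Sigma_\circ}$ to vanish off $\overline\Sigma$, i.e.\ $\Sigma_\circ\subseteq\overline\Sigma$.

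I expect the refinement from $\Sigma_\circ\subseteq\overline\Sigma$ to the sharp equivalence $\Sigma_\circ\subset\Sigma$ to be the principal obstacle: as in the closed case it is entirely an edge phenomenon, since the support-constrained spaces $(\widetilde\X^s_\sharp)^*$ record support on the closed set $\overline\Sigma$ while the object to be reconstructed is the relatively open $\Sigma$. Settling it requires controlling the traces and conormal derivatives of the layer potentials near $\partial\Sigma$ within the correct fractional, support-constrained Sobolev scale and applying the jump relations there, precisely the analysis packaged in Lemma~\ref{FM} and its screen version. Granting the range characterization, \eqref{inf-s} follows by combining it with the inf-criterion of the coercive case, and \eqref{1/4-s} by combining it with the range identity of the sign-definite case and the spectral resolution of Lemma~\ref{compact}, exactly as in the proof of Theorem~\ref{thm}.
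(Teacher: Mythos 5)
Your proof of the theorem itself follows the paper's own route step for step: the same symmetric factorization of $F^{\Lambda}_{\lambda}$ with a restricted middle operator (your $\mathcal M=R_{\Sigma}M_{\lambda}R^{*}_{\Sigma}$ versus the paper's $R^{*}_{\Sigma}M_{\lambda}R_{\Sigma}$ is only a cosmetic rearrangement), the same transfer of coercivity/sign-definiteness through the projector $R_{\Sigma}$, the same appeal to the Kirsch--Grinberg inf-criterion and range identity, the same compactness check feeding Lemma \ref{compact}, and the same reduction of everything to the range characterization $\Sigma_{\circ}\subset\Sigma\iff 1_{B}g^{\Sigma_{\circ}}_{\lambda}\in\ran(1_{B}G^{\sharp}_{\lambda}|(\widetilde\X^{s}_{\sharp})^{*})$, which is exactly the paper's Lemma \ref{FM-s}.

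The genuine gap is in your sketch of that range characterization, namely the claim that the forward implication is ``immediate'' because the density $1_{\Sigma_{\circ}}$ is supported in $\overline\Sigma$. That settles only the case $\sharp=D$, where $G^{D}_{\lambda}=\SL_{\lambda}$ and $g^{\Sigma_{\circ}}_{\lambda}=\SL_{\lambda}1_{\Sigma_{\circ}}$ with $1_{\Sigma_{\circ}}\in H^{1/2-\epsilon}_{\overline\Sigma}(\Gamma)\subseteq(\widetilde\X^{s}_{D})^{*}$. For $\sharp=N$ one has $G^{N}_{\lambda}=\DL_{\lambda}$: the test function $g^{\Sigma_{\circ}}_{\lambda}$ is still a \emph{single}-layer potential, whereas $\ran(G^{N}_{\lambda}|(\widetilde\X^{s}_{N})^{*})$ consists of \emph{double}-layer potentials with densities in $H^{s+1/2}_{\overline\Sigma}(\Gamma)$, so membership in the range has nothing to do with the support of $1_{\Sigma_{\circ}}$ and is not automatic. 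The paper produces the required double-layer density by solving an exterior Neumann screen problem, taking $\phi=(R_{\Sigma}\gamma_{1}\DL_{\lambda}R^{*}_{\Sigma})^{-1}R_{\Sigma}\gamma_{1}g^{\Sigma_{\circ}}_{\lambda}$; this needs coercivity of $\gamma_{1}\DL_{\lambda}$ to invert $R_{\Sigma}\gamma_{1}\DL_{\lambda}R^{*}_{\Sigma}$, Costabel's mapping properties to place the inverse in $\bou(H^{s-1/2}(\Sigma),H^{s+1/2}_{\overline\Sigma}(\Gamma))$, and Agranovich's uniqueness theorem for radiating solutions of the screen problem to conclude $g^{\Sigma_{\circ}}_{\lambda}=\DL_{\lambda}\phi$ away from $\overline{\Sigma_{\circ}}$. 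None of this appears in your argument. Your reverse implication (unique continuation plus the jump relation on a set $D_{\circ}$ meeting $\Sigma_{\circ}$ with $\overline{D_{\circ}}\cap\Sigma=\emptyset$) is the paper's argument and is fine; the $\overline\Sigma$-versus-$\Sigma$ edge issue you flag is real but is handled in the paper simply by phrasing the negation as $\Sigma_{\circ}\cap\Sigma^{c}\neq\emptyset$ and choosing $D_{\circ}$ away from $\Sigma$, not by any additional analysis near $\partial\Sigma$.
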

\begin{proof}
By \eqref{sin} and by the resolvent formula \eqref{resolvent}, one gets the factorized representation 
\begin{align*}
&F^{\Lambda}_{\lambda}=1_{B}G^{\sharp}_{\lambda}R^{*}_{\Sigma}(R_{\Sigma}M_{\lambda}R^{*}_{\Sigma})^{-1}R_{\Sigma}{G_{\lambda}^{\sharp}}^{*}1_{B}\\
=&(1_{B}G^{\sharp}_{\lambda}R^{*}_{\Sigma}(R_{\Sigma}M_{\lambda}R^{*}_{\Sigma})^{-1})R^{*}_{\Sigma}M_{\lambda}R_{\Sigma}(1_{B}G_{\lambda}^{\sharp}R^{*}_{\Sigma}(R_{\Sigma}M_{\lambda}R^{*}_{\Sigma})^{-1})^{*}\,.
\end{align*}
Since $R_{\Sigma}:\X_{\sharp}^{s}\to\widetilde\X_{\sharp}^{s}$ is an orthogonal projector (see \cite[Lemma 5.1]{MP-inv}, the coercivity of $M_{\lambda}$ implies the coercivity of  $R^{*}_{\Sigma}M_{\lambda}R_{\Sigma}$; likewise if $M_{\lambda}$ is sign-definite then  $R^{*}_{\Sigma}M_{\lambda}R_{\Sigma}$ is sign-definite as well. Then \eqref{inf-s} is consequence, by the inf-criterion in \cite[Theorem 1.16]{KG}, of Lemma \ref{FM-s} in the Appendix. \par 
By \eqref{factor} and \cite[Corollary 1.22]{KG}, one has that 
$\ran(1_{B}G^{\sharp}_{\lambda}R^{*}_{\Sigma}(R_{\Sigma}M_{\lambda}R^{*}_{\Sigma})^{-1})=\ran(1_{B}G^{\sharp}_{\lambda}R^{*}_{\Sigma})=\ran((F^{\Lambda}_{\lambda})^{1/2})$.  Thus, by Lemma \ref{FM-s}, $\Sigma_{\circ}\subset\Sigma$ if and only if $1_{B}g^{\Sigma_{\circ}}_{\lambda}\in\ran((F^{\Lambda}_{\lambda})^{1/2})=\dom((F^{\Lambda}_{\lambda})^{-1/2})$. By \eqref{sp-res-2}, the latter is equivalent to the convergence of the series in \eqref{1/4-s}. Indeed Lemma \ref{compact} applies since $\ran(\Lambda_{\lambda})=\ran(R^{*}_{\Sigma}(R_{\Sigma}M_{\lambda}R^{*}_{\Sigma})^{-1}R_{\Sigma})=
(\widetilde X_{\sharp}^{s})^{*}$ is compactly embedded in $\K^{*}_{\sharp}$.
\end{proof}

\subsection{Applications.}
\subsubsection{Dirichlet boundary conditions.} By \cite[Lemma 3.2]{JDE}, 
$$
M_{\lambda}^{D}=-\gamma_{0}\SL_{\lambda}\in\B(H^{-1/2}(\Gamma),H^{1/2}(\Gamma))\,,\quad \lambda>0\,,
$$
is negative (and hence coercive); therefore Theorems \ref{thm} and \ref{thm-s} apply to $\Delta_{\Lambda^{D}}$ and $\Delta_{\Lambda^{D,\Sigma}}$ (see Sections \ref{Dir} and \ref{Dir-s}) respectively.  
\subsubsection{Neumann boundary conditions.} By \cite[Lemma 3.2]{JDE}, 
$$
M_{\lambda}^{N}=-\gamma_{1}\DL_{\lambda}\in\B(H^{1/2}(\Gamma),H^{-1/2}(\Gamma))\,,\quad \lambda>0\,,
$$
is positive (and hence coercive); therefore Theorems \ref{thm} and \ref{thm-s} apply to $\Delta_{\Lambda^{N}}$ and $\Delta_{\Lambda^{N,\Sigma}}$ (see Sections \ref{Neu} and \ref{Neu-s}) respectively.  
\subsubsection{Semi-transparent boundary conditions.} Let $\alpha$ and $\theta$ be as in Section \ref{ST}. 
$$M^{\alpha}_{\lambda}=-\left(\frac1\alpha+\gamma_{0}\SL_{\lambda}\right)\in\B(L^{2}(\Gamma))\,,\quad\lambda>\lambda_{\alpha}\,,
$$ 
is negative (and hence coercive) whenever $\alpha$ is positive (since $\gamma_{0}\SL_{\lambda}$ is positive by \cite[Lemma 3.2]{JDE});
\par 
$$M^{\theta}_{\lambda}=(\theta-\gamma_{1}\DL_{\lambda})\in\B(H^{1/2}(\Gamma),H^{-1/2}(\Gamma))\,,\quad \lambda>\lambda_{\theta}\,,
$$ 
is positive (and hence coercive) whenever $\theta$ is positive (since $\gamma_{1}\DL_{\lambda}$ is negative by \cite[Lemma 3.2]{JDE}). 
\par 
Therefore, under these hypotheses,  Theorems \ref{thm} and \ref{thm-s} apply to $\Delta_{\Lambda^{\alpha}}$ and $\Delta_{\Lambda^{\theta}}$, $\Delta_{\Lambda^{\alpha,\Sigma}}$ and $\Delta_{\Lambda^{\theta,\Sigma}}$ (see Sections \ref{ST} and \ref{ST-s}) respectively.   
\section{Appendix.}
\begin{lemma}\label{FM} Let $g^{x}_{\lambda}$, $\lambda>0$, be defined as in \eqref{test} and let $B\subset\subset\RE^{n}\backslash\overline\Omega$. Then
$$
x\in\Omega\quad\iff\quad 1_{B}g^{x}_{\lambda}\in\ran(1_{B}G_{\lambda}^{\sharp}|{\X_{\sharp}^{s}}^{*}
)\,, \quad\sharp=D,N\,.
$$
\end{lemma}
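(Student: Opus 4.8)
The plan is to prove both implications by exploiting that $G^{D}_{\lambda}=\SL_{\lambda}$ and $G^{N}_{\lambda}=\DL_{\lambda}$ produce, on the measurement set $B\subset\subset\Omega_{\+}$, real-analytic solutions of the homogeneous equation $(-\Delta+\lambda)u=0$, whereas $g^{x}_{\lambda}$ is the fundamental solution, hence a solution of the same equation on $\RE^{n}\setminus\{x\}$ carrying a nonremovable singularity at $x$. Since $\lambda>0$ the operator $-\Delta+\lambda$ is strictly positive, so the only $H^{1}(\Omega_{\+})$ solution of the homogeneous exterior problem with vanishing trace is zero; this yields uniqueness for the exterior Dirichlet and Neumann problems among solutions decaying at infinity, which is the comparison principle I will use throughout. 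Both $g^{x}_{\lambda}$ (for $x\in\Omega$) and the layer potentials decay exponentially thanks to the $K_{\nu}$-asymptotics, so no radiation condition is needed and the comparison set $B$, being compactly contained in $\Omega_{\+}$, stays away from $\Gamma$.

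For the forward implication let $x\in\Omega$. In the Dirichlet case, invertibility of $\gamma_{0}\SL_{\lambda}$ (equivalently of $M^{D}_{\lambda}$, see Section~\ref{Dir}) lets me set $\phi:=(\gamma_{0}\SL_{\lambda})^{-1}\gamma_{0}g^{x}_{\lambda}$, so that $\SL_{\lambda}\phi$ and $g^{x}_{\lambda}$ share the same Dirichlet trace on $\Gamma$; since $x\in\Omega$ the trace $\gamma_{0}g^{x}_{\lambda}$ is smooth and ellipticity of the inverse boundary operator makes $\phi$ smooth, hence $\phi\in{\X^{s}_{D}}^{*}$. As both $\SL_{\lambda}\phi$ and $g^{x}_{\lambda}$ solve $(-\Delta+\lambda)u=0$ in the connected exterior $\Omega_{\+}$ and decay at infinity, the uniqueness statement above forces them to coincide on $\Omega_{\+}$, in particular on $B$; thus $1_{B}g^{x}_{\lambda}=1_{B}G^{D}_{\lambda}\phi$. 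The Neumann case is identical with $\gamma_{1}$, $\DL_{\lambda}$ and $M^{N}_{\lambda}$ replacing $\gamma_{0}$, $\SL_{\lambda}$, $M^{D}_{\lambda}$, matching Neumann traces via the invertible hypersingular operator $\gamma_{1}\DL_{\lambda}$.

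For the reverse implication I would argue by contraposition: assume $x\notin\Omega$ and $1_{B}g^{x}_{\lambda}=1_{B}G^{\sharp}_{\lambda}\phi$ for some density, and derive a contradiction. The function $G^{\sharp}_{\lambda}\phi$ solves $(-\Delta+\lambda)u=0$ in the open connected set $\Omega_{\+}$ (real-analytic there), while $g^{x}_{\lambda}$ solves the same equation on $\RE^{n}\setminus\{x\}$; they agree on the nonempty open set $B$. By unique continuation for the elliptic operator $-\Delta+\lambda$ on the connected set $\Omega_{\+}\setminus\{x\}$ (deleting one point does not disconnect it for $n\ge2$), the two functions coincide throughout $\Omega_{\+}\setminus\{x\}$. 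If $x\in\Omega_{\+}$, then $G^{\sharp}_{\lambda}\phi$ is real-analytic across $x$, so $g^{x}_{\lambda}$ would extend analytically past its own pole, which is impossible. If $x\in\Gamma$, I would instead compare local regularity: $G^{\sharp}_{\lambda}\phi$ belongs to $H^{1}_{\rm loc}$ (indeed to the appropriate $H^{s}$ class) near $\Gamma$, whereas $g^{x}_{\lambda}$ fails to be of class $H^{1}$ in any neighbourhood of its pole $x$, again a contradiction.

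I expect the genuinely delicate point to be the boundary case $x\in\Gamma$ in the reverse implication, where the layer potential $G^{\sharp}_{\lambda}\phi$ is itself singular along $\Gamma$ and one cannot simply invoke analyticity up to the boundary. The argument there must be quantitative: one pins down the Sobolev regularity that any layer potential with density in ${\X^{s}_{\sharp}}^{*}$ necessarily enjoys near $\Gamma$ and checks that it is strictly better than the local regularity of $g^{x}_{\lambda}$ at its pole, so the identity $g^{x}_{\lambda}=G^{\sharp}_{\lambda}\phi$ near $x$ becomes untenable. Connectedness of $\Omega_{\+}$, assumed from the outset, is what lets unique continuation transport the coincidence from $B$ to a full punctured neighbourhood of $x$.
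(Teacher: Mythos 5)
Your argument follows the paper's proof essentially step for step: the forward implication by matching the boundary trace through the invertible operator $\gamma_{0}\SL_{\lambda}$ (resp.\ $\gamma_{1}\DL_{\lambda}$) and invoking uniqueness of the exterior Dirichlet (resp.\ Neumann) problem, and the reverse implication by contraposition, unique continuation on $\Omega_{\+}\backslash\{x\}$, and the incompatibility of the pole of $g^{x}_{\lambda}$ with the regularity of the layer potential. One step needs its justification replaced: on a Lipschitz boundary you cannot conclude that $\phi$ is smooth from ``ellipticity of the inverse boundary operator'' (there is no pseudodifferential calculus on $\Gamma$, and smoothness is neither available nor needed); what the paper uses, and what suffices, is the Sobolev mapping property $(\gamma_{0}\SL_{\lambda})^{-1}\in\B(H^{s+1/2}(\Gamma),H^{s-1/2}(\Gamma))$, $s\in[0,1/2]$, and its Neumann analogue (bounded invertibility at $s=0$ combined with Costabel's mapping theorem), which places $\phi$ in ${\X^{s}_{\sharp}}^{*}$ because $\gamma_{k_{\sharp}}g^{x}_{\lambda}$ is the trace of a function smooth in a neighbourhood of $\Gamma$. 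Your explicit $H^{1}$-versus-non-$H^{1}$ comparison for the boundary case $x\in\Gamma$ is sound and is in fact spelled out more carefully than in the paper, which disposes of both subcases of $x\notin\Omega$ with the single remark that $G^{\sharp}_{\lambda}\phi\in\C^{\infty}(\Omega_{\+})$ while $g^{x}_{\lambda}$ is singular at $x$.
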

\begin{proof} If $x\in \Omega$ then $g^{x}_{\lambda}$ solves the exterior boundary value problem
in $\Omega_{\+}=\RE^{n}\backslash\overline\Omega$
$$
\begin{cases}
(  -\Delta+\lambda)u=0\\
\gamma^{\+}_{k_{\sharp}}u=\gamma_{k_{\sharp}}g^{x}_{\lambda}\,,
\end{cases}
$$
where $\gamma^{\+}_{k_{\sharp}}$ denotes the one-sided trace and $k_{D}=0$, $k_{N}=1$. Since such a problem has a unique  radiating solution (see, e.g., \cite[ Theorem 9.11]{McL} for the Dirichlet case and \cite[Exercise 9.5]{McL} for the Neumann case) and $(-\Delta+\lambda)G^{\sharp}_{\lambda}\phi=0$ in $\Omega_{\+}$, one gets the equality
$g^{x}_{\lambda}|\Omega_{\+}=(G_{\lambda}^{\sharp}\phi)|\Omega_{\+}$, where $\phi=(\gamma_{k_{\sharp}}G^{\sharp}_{\lambda})^{-1}\gamma_{k_{\sharp}}g^{x}_{\lambda}$.
Here we are using $(\gamma_{0}\SL_{\lambda})^{-1} \in\B(H^{s+1/2}(\Gamma),H^{s-1/2})$ and $(\gamma_{1}\DL_{\lambda})^{-1} \in\B(H^{s-1/2}(\Gamma),H^{s+1/2})$, $s\in[0,1/2]$; this is consequence of bounded invertibility for $s=0$ (see \cite[relations (5.32) and (5.33)]{JMPA} combined with the mapping properties provided in \cite[Theorem 3]{costa}. Suppose now that $x\notin\Omega$ and that there exists $\phi\in\X^{s}_{\sharp}$ such that $1_{B}g^{x}_{\lambda}=1_{B}G^{\sharp}_{\lambda}\phi$. Since both $g^{x}_{\lambda}$ and $G^{\sharp}_{\lambda}\phi$ solve $(-\Delta+\lambda)u=0$ in $\Omega_{\+}\backslash\{x\}$, they coincide there by the principle of unique continuation. This is not possible, since $g^{x}_{\lambda}$ is singular at $x$ while $G^{\sharp}_{\lambda}\phi\in\C^{\infty}(\Omega_{\+})$ by elliptic regularity. 
\end{proof}

\begin{lemma}\label{FM-s} Let $\Sigma_{\circ}\subset\Gamma_{\!\circ}$ be a relatively open subset of $\Gamma_{\!\circ}$, the Lipschitz boundary of an open bounded set $\Omega_{\circ}\subset\RE^{n}$; define $g^{\Sigma_{\circ}}_{\lambda}(y):=\int_{\Sigma_{\circ}}g^{x}_{\lambda}(y)\,d\sigma_{\Gamma_{\!\circ}}(x)$. Then 
$$
\Sigma_{\circ}\subset\Sigma \quad\iff\quad 1_{B}g^{\Sigma_{\circ}}_{\lambda}\in\ran(1_{B}G_{\lambda}^{\sharp}|(\widetilde\X_{\sharp}^{s})^{*})\,, \quad\sharp=D,N\,.
$$
\end{lemma}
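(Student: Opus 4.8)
The plan is to mirror the proof of Lemma \ref{FM}, replacing the point source $g^{x}_{\lambda}$ by the single-layer patch $g^{\Sigma_{\circ}}_{\lambda}$, the obstacle $\Omega$ by the screen $\Sigma$, and the full boundary operator $\gamma_{k_{\sharp}}G^{\sharp}_{\lambda}$ by its compression $R_{\Sigma}\gamma_{k_{\sharp}}G^{\sharp}_{\lambda}R^{*}_{\Sigma}=\mp R_{\Sigma}M^{\sharp}_{\lambda}R^{*}_{\Sigma}$, whose bounded invertibility on the relevant spaces is the content of Section \ref{unclosed}. (Recall $G^{\sharp}_{\lambda}|(\widetilde\X^{s}_{\sharp})^{*}=G^{\sharp}_{\lambda}R^{*}_{\Sigma}$, since $R^{*}_{\Sigma}$ is the inclusion $H^{t}_{\overline\Sigma}(\Gamma)\hookrightarrow H^{t}(\Gamma)$.) The structural novelty with respect to the closed-obstacle case is that the set in which the comparison takes place is now $\RE^{n}\backslash\overline\Sigma$, which for a screen is \emph{connected}; this is exactly what makes unique continuation across the edge $\partial\Sigma$ effective.

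For the implication $\Sigma_{\circ}\subseteq\Sigma\Rightarrow 1_{B}g^{\Sigma_{\circ}}_{\lambda}\in\ran(1_{B}G^{\sharp}_{\lambda}|(\widetilde\X^{s}_{\sharp})^{*})$ I would argue as follows. Since $\Sigma_{\circ}\subseteq\Sigma$ one has $\overline{\Sigma_{\circ}}\subseteq\overline\Sigma$, so $g^{\Sigma_{\circ}}_{\lambda}$ solves $(-\Delta+\lambda)u=0$ in $\RE^{n}\backslash\overline\Sigma\supseteq B$ and is radiating. In the Dirichlet case the representation is explicit: because the surface measures of $\Gamma$ and $\Gamma_{\!\circ}$ coincide on the common piece $\Sigma_{\circ}$, one has the exact identity $g^{\Sigma_{\circ}}_{\lambda}=\SL_{\lambda}1_{\Sigma_{\circ}}=G^{D}_{\lambda}R^{*}_{\Sigma}1_{\Sigma_{\circ}}$ on all of $\RE^{n}$, and $1_{\Sigma_{\circ}}\in H^{s-1/2}_{\overline\Sigma}(\Gamma)=(\widetilde\X^{s}_{D})^{*}$ because $\supp(1_{\Sigma_{\circ}})\subseteq\overline\Sigma$ and $s-1/2\le0<1/2$; restricting to $B$ gives membership at once. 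In the Neumann case I would instead set $\psi:=(R_{\Sigma}\gamma_{1}G^{N}_{\lambda}R^{*}_{\Sigma})^{-1}R_{\Sigma}\gamma_{1}g^{\Sigma_{\circ}}_{\lambda}\in(\widetilde\X^{s}_{N})^{*}=H^{s+1/2}_{\overline\Sigma}(\Gamma)$, using the invertibility recalled above, so that $G^{N}_{\lambda}R^{*}_{\Sigma}\psi$ and $g^{\Sigma_{\circ}}_{\lambda}$ carry the same Neumann data on $\Sigma$; well-posedness of the exterior screen problem should then give $g^{\Sigma_{\circ}}_{\lambda}=G^{N}_{\lambda}R^{*}_{\Sigma}\psi$ on $\RE^{n}\backslash\overline\Sigma$, hence on $B$.

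For the converse I would argue by contraposition, exactly as in Lemma \ref{FM}. Assume $\Sigma_{\circ}\not\subseteq\Sigma$ and suppose, for contradiction, that $1_{B}g^{\Sigma_{\circ}}_{\lambda}=1_{B}G^{\sharp}_{\lambda}R^{*}_{\Sigma}\psi$ for some $\psi$ supported in $\overline\Sigma$. Both $g^{\Sigma_{\circ}}_{\lambda}$ and $G^{\sharp}_{\lambda}R^{*}_{\Sigma}\psi$ solve $(-\Delta+\lambda)u=0$ in the open set $W:=\RE^{n}\backslash(\overline\Sigma\cup\overline{\Sigma_{\circ}})$; as $\overline\Sigma$ and $\overline{\Sigma_{\circ}}$ are pieces of Lipschitz hypersurfaces with boundary (screens), their union does not disconnect $\RE^{n}$, so $W$ is connected, and it contains $B$ (which is disjoint from $\Gamma\supseteq\overline\Sigma$, and may be taken disjoint from $\overline{\Sigma_{\circ}}$). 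By unique continuation the two functions coincide throughout $W$. But there is a relatively open piece of $\Sigma_{\circ}$ lying in $\RE^{n}\backslash\overline\Sigma$, across which $G^{\sharp}_{\lambda}R^{*}_{\Sigma}\psi$ is smooth (its singular support being contained in $\overline\Sigma$), whereas $g^{\Sigma_{\circ}}_{\lambda}$ has a nonzero jump in its normal derivative there (equal to $1_{\Sigma_{\circ}}$ up to sign); comparing the one-sided traces from the two sides of this piece, both of which lie in $W$, yields a contradiction. This direction works uniformly for $\sharp=D,N$.

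The step I expect to be the main obstacle is the forward implication in the \emph{Neumann} case. The difficulty is that $g^{\Sigma_{\circ}}_{\lambda}$ is a single layer, while $\ran(1_{B}G^{N}_{\lambda}R^{*}_{\Sigma})$ consists of double-layer fields, whose Neumann trace is continuous across $\Sigma$; since $\gamma_{1}g^{\Sigma_{\circ}}_{\lambda}$ jumps across $\Sigma_{\circ}\subseteq\Sigma$, one must be precise about which one-sided (or averaged) Neumann datum is fed into $(R_{\Sigma}M^{N}_{\lambda}R^{*}_{\Sigma})^{-1}$, and verify that the resulting potential genuinely reproduces $g^{\Sigma_{\circ}}_{\lambda}$ on the connected exterior $\RE^{n}\backslash\overline\Sigma$. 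The connectedness of this set, harmless (indeed helpful) in the reverse direction, is precisely what makes the forward Neumann step delicate, and it is here that the well-posedness of the screen transmission problem must be invoked carefully. A secondary point is to check that the density produced lands in the prescribed space $(\widetilde\X^{s}_{\sharp})^{*}$, which I would deduce from the mapping properties of $(R_{\Sigma}M^{\sharp}_{\lambda}R^{*}_{\Sigma})^{-1}$ quoted in Section \ref{unclosed} together with the trace regularity already used in Lemma \ref{FM}.
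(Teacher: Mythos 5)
Your proposal tracks the paper's own proof closely in structure. The Dirichlet forward implication is obtained exactly as you describe, from $g^{\Sigma_{\circ}}_{\lambda}=\SL_{\lambda}1_{\Sigma_{\circ}}=G^{D}_{\lambda}1_{\Sigma_{\circ}}$ with $1_{\Sigma_{\circ}}\in H^{1/2-\epsilon}_{\overline\Sigma}(\Gamma)\subseteq(\widetilde\X_{D}^{s})^{*}$; and the converse is obtained, as in your contrapositive argument, by fixing an open $D_{\circ}$ with $\overline{D_{\circ}}\cap\Sigma=\emptyset$ and $D_{\circ}\cap\Sigma_{\circ}\neq\emptyset$, propagating the assumed equality on $B$ by unique continuation (the paper distinguishes the cases $D_{\circ}\subset B$ and $D_{\circ}\subset\RE^{n}\backslash\overline{B}$ rather than arguing on your set $W$), and contrasting the jump $[\gamma^{\circ}_{1}]\SL^{\circ}_{\lambda}1_{\Sigma_{\circ}}=-1_{\Sigma_{\circ}}\neq0$ with the smoothness of $G^{\sharp}_{\lambda}\phi$ off $\overline\Sigma$. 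For the Neumann forward implication the paper does precisely what you sketch: it sets $\phi=(R_{\Sigma}\gamma_{1}\DL_{\lambda}R^{*}_{\Sigma})^{-1}R_{\Sigma}\gamma_{1}g^{\Sigma_{\circ}}_{\lambda}\in H^{s+1/2}_{\overline\Sigma}(\Gamma)$ and appeals to uniqueness of the radiating solution of the screen problem to conclude $g^{\Sigma_{\circ}}_{\lambda}=\DL_{\lambda}\phi$.

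The gap in your write-up is therefore exactly the one you flag, and it is genuine; moreover, your own observation, pushed to its conclusion, shows that this step cannot be completed along the proposed lines. Any $G^{N}_{\lambda}\phi=\DL_{\lambda}\phi$ with $\supp(\phi)\subseteq\overline\Sigma$ has continuous conormal derivative across $\Sigma_{\circ}\subseteq\Sigma$, whereas $[\gamma_{1}]g^{\Sigma_{\circ}}_{\lambda}=-1_{\Sigma_{\circ}}\neq0$ there. Both fields are radiating real-analytic solutions of $(-\Delta+\lambda)u=0$ on the connected open set $\RE^{n}\backslash\overline\Sigma$, so an equality on $B$ alone already propagates, by unique continuation, to all of $\RE^{n}\backslash\overline\Sigma$ and hence to the one-sided traces on both faces of $\Sigma_{\circ}$, forcing $-1_{\Sigma_{\circ}}=0$. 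In other words, no choice of one-sided or averaged Neumann datum fed into $(R_{\Sigma}M^{N}_{\lambda}R^{*}_{\Sigma})^{-1}$ can rescue the argument, because $g^{\Sigma_{\circ}}_{\lambda}$ does not lie in the uniqueness class (fields with $[\gamma_{1}]u=0$ across $\Sigma$) of the screen problem being invoked; the paper's own appeal to that uniqueness theorem does not address this mismatch either. The natural remedy is not a more careful well-posedness argument but a different test function for $\sharp=N$, namely the double-layer analogue $\DL^{\circ}_{\lambda}1_{\Sigma_{\circ}}$ (a dipole distribution on $\Sigma_{\circ}$), for which the forward implication becomes the tautology $\DL_{\lambda}1_{\Sigma_{\circ}}=G^{N}_{\lambda}1_{\Sigma_{\circ}}$ (modulo checking that the density lies in $(\widetilde\X_{N}^{s})^{*}=H^{s+1/2}_{\overline\Sigma}(\Gamma)$, which itself requires care since $1_{\Sigma_{\circ}}\notin H^{1/2}(\Gamma)$ in general) and the converse is detected by the Dirichlet jump $[\gamma^{\circ}_{0}]$. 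As written, your proof establishes the lemma only for $\sharp=D$.
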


\begin{proof} ($\sharp=D$) Let $\Sigma_{\circ}\subset \Sigma$; then $g^{\Sigma_{\circ}}_{\lambda}=\SL_{\lambda}1_{\Sigma_{\circ}}=G_{\lambda}^{D}\phi$ with $\phi=1_{\Sigma_{\circ}}\in H^{1/2-\epsilon}_{\overline\Sigma}(\Gamma)$, $\epsilon>0$. Since $H^{1/2-\epsilon}_{\overline\Sigma}(\Gamma)\subseteq (\widetilde\X_{D}^{s})^{*}$ whenever $\epsilon\le 1-s$, one gets $g^{\Sigma_{\circ}}_{\lambda}\in\ran(G_{\lambda}^{\sharp}|(\widetilde\X_{D}^{s})^{*})$. 
\par 
Suppose now $\Sigma_{\circ}\cap\Sigma^{c}\not=\emptyset$ and let $D_{\circ}\subset\mathbb{R}^{n}$ open such that $\overline{D_{\circ}}\cap\Sigma=\emptyset$ and $D_{\circ}\cap \Sigma_{\circ}\neq\emptyset$. By elliptic regularity, $g^{\Sigma_{\circ}}_{\lambda}=\SL^{\circ}_{\lambda}1_{\Sigma_{\circ}}\in\C^{\infty}(\RE^{n}\backslash\overline\Sigma_{\circ})
$ (here and below, the apex ${\,}^{\circ}$ denotes objects defined by using the surface $\Gamma_{\!\circ}$) and $G^{D}_{\lambda}\phi=\SL_{\lambda}\phi\in\C^{\infty}(\RE^{n}\backslash\overline\Sigma)$ whenever $\phi\in (\widetilde\X_{D}^{s})^{*}$. By the well known jump relations for layer potentials (see, e.g., \cite[Chapter 6]{McL}) we have 
$[\gamma^{\circ}_{1}]  \SL^{\circ}_{\lambda}1_{\Sigma_{\circ}}=-1_{\Sigma_{\circ}}\neq0$; hence $g^{\Sigma_{\circ}}_{\lambda}
\notin H^{2}(  D_{\circ}) $. Assume that there exists $\phi\in (\widetilde\X_{D}^{s})^{*}$ such that%
\begin{equation}
1_{B}g^{\Sigma_{\circ}}_{\lambda}=1_{B}G^{D}_{\lambda}\phi\,.\label{cond_1}%
\end{equation}
If $D_{\circ}\subset B$ then we have $1_{B}g^{\Sigma_{\circ}}_{\lambda}\notin H^{2}(  D_{\circ})  $ and
$\SL_{\lambda}\phi\in \C^{\infty}(  D_{\circ})  $; hence the identity \eqref{cond_1} is impossible. Let then assume that $D_{\circ}\subset\mathbb{R}%
^{n}\backslash\overline{B}$. Since both $\SL^{\circ}_{\lambda}1_{\Sigma_{\circ}}$ and $\SL_{\lambda}\phi$ are radiating
solutions of the equation $(  -\Delta+\lambda)  u=0$ in $B$, \eqref{cond_1} and the unique continuation property yield $1_{B}g^{\Sigma_{\circ}}_{\lambda}=1_{B}\SL_{\lambda}\phi$ in $D_{\circ}\backslash (D_{\circ}\cap\overline\Sigma_{\circ})$, which is a contradiction.
\par\noindent
($\sharp=N$) As in the previous case, if $\Sigma_{\circ}\subset \Sigma$ then $g^{\Sigma_{\circ}}_{\lambda}=\SL_{\lambda}1_{\Sigma_{\circ}}$ with $1_{\Sigma_{\circ}}\in H^{1/2-\epsilon}_{\overline\Sigma}(\Gamma)$, $\epsilon>0$. Since $\gamma
_{1}\DL_{\lambda}\in\bou(H^{1/2}(\Gamma),H^{-1/2}(\Gamma))$ is coercive (see, e.g., \cite[Lemma 3.2]{JDE}), $R_{\Sigma}\gamma
_{1}\DL_{\lambda}R^{*}_{\Sigma}\in\bou(H_{\overline\Sigma}^{1/2}(\Gamma),H^{-1/2}(\Sigma))$ is coercive as well (see \cite[Remark 5.2]{MP-inv}) and hence $(R_{\Sigma}\gamma
_{1}\DL_{\lambda}R^{*}_{\Sigma})^{-1}\in\bou(H^{-1/2}(\Sigma),H_{\overline\Sigma}^{1/2}(\Gamma))$ (see \cite[Remark 4.6]{MP-inv}); then, by the mapping properties of $\gamma_{1}\DL_{\lambda}$ (see \cite[Theorem 3]{costa}), one gets $(R_{\Sigma}\gamma
_{1}\DL_{\lambda}R^{*}_{\Sigma})^{-1}\in\bou(H^{s-1/2}(\Sigma),H_{\overline\Sigma}^{s+1/2}(\Gamma))$, $s\in[0,1/2]$. Since both $g^{\Sigma_{\circ}}_{\lambda}$ and $\DL_{\lambda}\phi$ solve $(-\Delta+\lambda)u=0$ in $\RE^{n}\backslash\overline\Sigma_{\circ}$ 
with boundary condition $R_{\Sigma}\gamma_{1}u=R_{\Sigma}\gamma_{1}g^{\Sigma_{\circ}}_{\lambda}$ whenever $\phi=(R_{\Sigma}\gamma_{1}\DL_{\lambda}R^{*}_{\Sigma})^{-1}R_{\Sigma}\gamma_{1}g^{\Sigma_{\circ}}_{\lambda}$ and such a problem has a unique  radiating solution (see \cite[Theorem 3.3]{Agra}),  one gets $g^{\Sigma_{\circ}}_{\lambda}=\DL_{\lambda}\phi=G^{N}_{\lambda}\phi$, $\phi\in H_{\overline\Sigma}^{s+1/2}(\Gamma)$.\par
If $\Sigma_{\circ}\cap\Sigma^{c}\not=\emptyset$ one shows that $1_{B}g^{\Sigma_{\circ}}_{\lambda}\notin \ran(1_{B}G_{\lambda}^{N}|(\widetilde\X_{N}^{s})^{*})$ by the same kind of reasonings used in the case $\sharp=D$.\end{proof}

\end{document}